\numberwithin{equation}{section}
\newtheorem{theorem}{Theorem}[section]
\newtheorem{lemma}[theorem]{Lemma}
\newtheorem{Theorem}{Theorem}[section]
\newtheorem{Thm}{Theorem}[section]
\theoremstyle{definition}
\def\XXint#1#2#3{{\setbox0=\hbox{$#1{#2#3}{\int}$}
     \vcenter{\hbox{$#2#3$}}\kern-.5\wd0}}
\def\B{\mathbb{R}}
\def\l{\lambda}
\def\v{\varepsilon}
\def\om{\overline{\Omega}}
\def\m{\mathrm{C}_{\mathrm{loc}}}
\begin{document}
\title[Gel'fand problem]{Local uniqueness of $m$-bubbling sequences for the Gel'fand equation.}

\author{Daniele Bartolucci$^{(\dag)}$}
 \address{Daniele Bartolucci, Department of Mathematics, University of Rome {\it "Tor Vergata"},  Via della ricerca scientifica n.1, 00133 Roma,
Italy.}
\email{bartoluc@mat.uniroma2.it}

\author{Aleks Jevnikar}
 \address{Aleks Jevnikar, Department of Mathematics, University of Rome {\it "Tor Vergata"},  Via della ricerca scientifica n.1, 00133 Roma,
Italy.}
\email{jevnikar@mat.uniroma2.it}

\author{Youngae Lee}
\address{Youngae ~Lee,~National Institute for Mathematical Sciences, 70 Yuseong-daero 1689 beon-gil, Yuseong-gu, Daejeon, 34047, Republic of Korea}
\email{youngaelee0531@gmail.com}

\author{ Wen Yang$^{(\ddag)}$}
\address{ Wen ~Yang,~Wuhan Institute of Physics and Mathematics, Chinese Academy of Sciences, P.O. Box 71010, Wuhan 430071, P. R. China}
\email{math.yangwen@gmail.com}

\thanks{2010 \textit{Mathematics Subject classification:} 35B32, 35J25, 35J61, 35J99.
82D15.}


\thanks{$^{(\dag)}$Research partially supported by FIRB project "{\em
Analysis and Beyond}",  by PRIN project 2012, ERC PE1\_11,
"{\em Variational and perturbative aspects in nonlinear differential problems}", and by the Consolidate the Foundations
project 2015 (sponsored by Univ. of Rome "Tor Vergata"),  ERC PE1\_11,
"{\em Nonlinear Differential Problems and their Applications}".}
\thanks{$^{(\ddag)}$ Research partially supported by CAS Pioneer Hundred Talents Program (Y8S3011001).}

\begin{abstract}
We consider the  Gel'fand problem,
\begin{equation*}
\begin{cases}
\Delta w_{\v}+\v^2he^{w_{\v}}=0\quad&\mathrm{in}\quad\Omega,\\
\\
w_{\v}=0\quad&\mathrm{on}\quad\partial\Omega,
\end{cases}
\end{equation*}
where $h$ is a nonnegative function in ${\Omega\subset\mathbb{R}^2}$. {Under suitable assumptions on $h$ and $\Omega$,
we prove the local uniqueness of $m-$bubbling solutions for any $\varepsilon>0$ small enough.}

\end{abstract}
\maketitle
{\bf Keywords}: Gel'fand equation, local uniqueness, blow up solutions.

\section{\bf Introduction}
We are concerned with the Gel'fand problem,
\begin{equation}
\label{1.1e}
\begin{cases}
-\Delta w_{n}=\v^2_nhe^{w_n}\quad &\mathrm{in}\quad\Omega,\\
\\
w_{n}=0~&\mathrm{on}\quad\partial\Omega,
\end{cases}
\end{equation}
where ${\Omega\subset\mathbb{R}^2}$ is a {smooth  and bounded domain,  $\lim\limits_{n\to+\infty}\v_n\to0$, and}
\begin{equation}
\label{1.h}
h(x)=\hat h(x)\exp(-4\pi\sum_{i=1}^\ell\alpha_iG(x,p_i))\geq 0,\ \ {\hat h>0, \ \hat{h}\in C^{\infty}(\overline\Omega).}
\end{equation}
{Here} $p_i$'s are distinct points in $\Omega,$ $\alpha_i>-1$ for any $i=1,\cdots,\ell$   and $G(x,p)$ is the Green function satisfying,
\begin{equation*}
-\Delta G(x,p){=}\delta_p\quad \mathrm{in}\quad \Omega,\quad G(x,p)=0\quad \mathrm{on}\quad \partial\Omega.
\end{equation*}
The equation in \eqref{1.1e} and its mean field type analogue \eqref{1m} below, have a long history in pure and applied mathematics, ranging from conformal geometry, thermal ignition models, kinetic and mean field models
in statistical mechanics, chemiotaxis dynamics and gauge field theories, see for example \cite{beb, clmp1, KW, suzC, tar, w} and references therein.
Therefore a huge work has been done to understand these equations. The literature is so large that it is impossible to provide a complete list and we just refer to
to \cite{BDeM, BdM2, BLin3, bm, cli1, CLin1, CLin2, CLin5, EGP, KMdP, yy, ls, Mal2, NS90, OSS, suz} and the references quoted therein.
Among many other things, an interesting property of these problems is the lack of compactness \cite{bm} which in turn causes non uniqueness of
solutions for $\v>0$. As a consequence the global bifurcation diagram is in general very rich and complicated \cite{CLin2}, \cite{NS90} and solutions may be degenerate.
Here we say that a solution of \eqref{1.1e} is degenerate if the corresponding linearized problem {(see \eqref{1.8e} below)} admits a non trivial solution.
This is why it is important to understand where we can recover uniqueness and non degeneracy in the bifurcation diagram.

{In this paper, we consider a sequence of bubbling solutions  ${ w_{n}}$  of \eqref{1.1e} with $n\to+\infty$.}

\medskip
\noindent {\bf Definition 1.1.} \emph{A sequence of solutions   {${ w_{n}}$} of \eqref{1.1e}, is said to be an $m-$bubbling {(or blow up)} sequence {if}
\begin{equation*}
\v_n^2he^{{ w_{n}}} \rightharpoonup 8\pi\sum_{j=1}^m\delta_{q_j}\quad\mathrm{as}\quad n\to+\infty,
\end{equation*}
weakly in the sense of measures in $\Omega,$ where {$\{q_1,\cdots,q_m\}\subset\Omega$} are $m$ distinct points satisfying $\{q_1,\cdots,q_m\}\cap\{p_1,\cdots,p_\ell\}=\emptyset$. The points $q_j$ are said to be the blow up points and $\{q_1,\cdots,q_m\}$ the blow up set.}
\medskip

\medskip
\noindent {\bf Remark 1.1.} \emph{It is well known  \cite{yy, ls, NS90}, that if $w_{n}$ of \eqref{1.1e} is an $m-$bubbling sequence, then $\v_n^2\int_{\Omega}he^{w_n}\to 8m\pi$ as $n\to+\infty.$}
\medskip

By letting $\l_n:=\v_n^2\int_{\Omega}he^{{ w_{n}}}$, we see that \eqref{1.1e} takes the form of the well known mean field equation \cite{clmp1},
\begin{equation}
\label{1m}
\begin{cases}
-\Delta {u_{n}=\l_n\dfrac{he^{u_{n}}}{\int_{\Omega}he^{u_{n}}}}\quad &\mathrm{in}\quad\Omega,\\
\\
{u_{n}}=0~&\mathrm{on}\quad\partial\Omega.
\end{cases}
\end{equation}
Let us shortly discuss some results about \eqref{1m}. Let
$$R(x,y)=G(x,y)+\frac{1}{2\pi}\log|x-y|,$$
be the regular part of the Green function $G(x,y)$. For fixed
\begin{equation*}
{\bf q}=(q_1,\cdots,q_m)\in\Omega\times\cdots\times\Omega,
\end{equation*}
we set,
\begin{equation}
\label{1.2}
G_j^*(x)=8\pi R(x,q_j)+8\pi\sum_{l\neq j} G(x,q_l),
\end{equation}
\begin{equation}
\label{1.3}
l({\bf q})=\sum_{j=1}^m\left(\Delta\log h(q_j)\right)h(q_j)e^{G_j^*(q_j)},
\end{equation}
and
\begin{equation}
\label{1.4}
f_{\mathbf{q},j}(x)=8\pi\left[R(x,q_j)- R(q_j,q_j)+\sum_{l\neq j}(G(x,q_l)-G(q_j,q_l))\right]+\log \frac{h(x)}{ h(q_j)}.
\end{equation}
We will denote by $B_r(q)$ the ball of radius $r$ centred at $q\in\Omega$. For the case $m\geq2$ we fix a constant $r_0\in(0,\frac12)$ and a family of open sets $\Omega_j$ satisfying $\Omega_l\cap\Omega_j=\emptyset$ if $l\neq j$, $\bigcup_{j=1}^m\om_j=\om$, $B_{2r_0}(q_j)\subset\Omega_j,~j=1,\cdots,m$. Then, let us define,
\begin{equation}
\label{1.5}
D(\mathbf{q})=\lim_{r\to0}\sum_{j=1}^m h(q_j)e^{G_j^*(q_j)}\left(\int_{\Omega_j\setminus B_{r_j}(q_j)}
e^{\Phi_j(x,\mathbf{q})} \mathrm{d}x-\frac{\pi}{r_j^2}\right),
\end{equation}
where $\Omega_1=\Omega$ if $m=1$, $r_j=r\sqrt{8h(q_j)e^{G_j^*}(q_j)}$ and
\begin{equation}
\label{1.6}
\Phi_j(x,\mathbf{q})=8\pi\sum_{l=1}^mG(x,q_l)-G_j^*(q_j)+\log h(x)-\log h(q_j).
\end{equation}

{ To describe} the location of the blow up points, we introduce the following function.  For $(x_1,\cdots,x_m)\in\Omega\times\cdots\times\Omega$ we define,
\begin{equation}
\label{1.7}
f_m(x_1,x_2,\cdots,x_m)=\sum_{j=1}^{m}\Big[\log(h(x_j))+4\pi R(x_j,x_j)\Big]+4\pi\sum_{l\neq j}G(x_l,x_j),
\end{equation}
and let $D_{\Omega}^2f_m$ be its Hessian matrix on $\Omega$. The function $f_m$ is also known in literature as the $m-$vortex Hamiltonian \cite{New}.
It is also well known \cite{CLin1, NS90} that the blow up points vector ${\bf q}=(q_1,\cdots,q_m)$ is a critical point of $f_m$.
{In this paper, we deal with critical points of the function $f_m$ whose Hessian matrix $D_{\Omega}^2f_m$ is non-degenerate.}
\medskip

{In view of Remark 1.1, we see that, if $u_n$ is an $m$-bubbling sequence of \eqref{1m}, then $\l_n\to8m\pi$.
We say that local uniqueness of $m$-bubbling sequences of \eqref{1m} holds if the following is true:\\
for a fixed critical point ${\bf q}=(q_1,\cdots,q_m)$ of $f_m$, if there exists two $m$-bubbling sequences $u_n^{(1)},u_n^{(2)}$ of \eqref{1m}
with the same $\l_n$ and  whose blow up set is $\{q_1,\cdots,q_m\}$,
then, if $\l_n$ is close enough to $8\pi m$, it holds $u_n^{(1)}\equiv u_n^{(2)}$.\\
Actually, Lin and Yan in \cite{ly2} have initiated the study of the local uniqueness of $m$-bubbling sequences for the
Chern-Simons-Higgs equation. Inspired by that approach, in \cite{BJLY,BJLY2} the authors of this work proved
local uniqueness and non-degeneracy of $m$-bubbling sequences  of \eqref{1m}. More precisely, we have the following:}

\begin{Thm}[\cite{BJLY,BJLY2}]
\label{th1a.1}
Let $\mathbf{q}=(q_1,\cdots,q_m)$ be a critical point of $f_m$ such that $q_j\in\Omega\setminus\{p_1,\cdots,p_\ell\}$, $j=1,\cdots,m$
and $det(D_{\Omega}^2f_m(\mathbf{q}))\neq0$. Assume that either $\ell(\mathbf{q})\neq0$ or $D(\mathbf{q})\neq0$. Then the following facts hold true:

\noindent (i) Local uniqueness with respect to $\l_n$: Let $u_{n}^{(1)}$ and $u_{n}^{(2)}$ be  two sequence of solutions of \eqref{1m}
with  fixed $\l_n$. If $\l_n$ is sufficiently close to $8m\pi$, then $u_{n}^{(1)}\equiv u_{n}^{(2)}$.

\noindent (ii) Non-degeneracy with respect to $\l_n$: If $\l_n$ is  sufficiently close to $8m\pi$, then the linearized problem for \eqref{1m},
\begin{equation}
\label{1.8l}
\begin{cases}
\Delta\phi_n+\lambda_n\frac{he^{u_n}}{\int_{\Omega}he^{u_n}d y}\left(\phi_n-\frac{\int_{\Omega}he^{u_n}\phi_n d y}{\int_{\Omega}he^{u_n}d y}\right)=0 \quad &\mathrm{in}\quad \Omega, \\
\\
\phi_n=0\ &\mathrm{on}\quad \partial\Omega,
\end{cases}
\end{equation}
admits only the trivial solution $\phi_n\equiv 0$.
\end{Thm}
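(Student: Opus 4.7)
The plan is to argue by contradiction along the standard Lin--Yan scheme, reducing both statements to a vanishing theorem for the normalized difference (resp.\ for a normalized kernel element) and extracting a contradiction via Pohozaev-type local identities that encode $\ell(\mathbf{q})$, $D(\mathbf{q})$ and the Hessian $D_\Omega^2 f_m(\mathbf{q})$.

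First, for part (i), I would assume the existence of two distinct $m$-bubbling sequences $u_n^{(1)},u_n^{(2)}$ with the same $\lambda_n \to 8\pi m$ and the same blow-up set $\{q_1,\dots,q_m\}$, and introduce the normalized difference
\[
\xi_n \;=\; \frac{u_n^{(1)} - u_n^{(2)}}{\|u_n^{(1)} - u_n^{(2)}\|_{L^\infty(\Omega)}}, \qquad \|\xi_n\|_{L^\infty}=1.
\]
An elementary computation shows that $\xi_n$ satisfies a linear equation whose coefficients are averages of the nonlinearity along the segment joining $u_n^{(1)}$ and $u_n^{(2)}$; up to negligible remainders this is the linearized mean field operator \eqref{1.8l} associated to a convex combination of the two solutions, which is itself an $m$-bubbling sequence. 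The same normalization procedure applied directly to a nontrivial kernel element $\phi_n$ of \eqref{1.8l} yields part (ii). Thus both statements reduce to proving that any $L^\infty$-normalized solution of the linearized equation around an $m$-bubbling sequence must vanish in the limit, contradicting $\|\xi_n\|_{L^\infty}=1$.

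Second, I would analyze the limit of $\xi_n$ using the known fine asymptotics for $m$-bubbling sequences (Chen--Lin-type expansions). Near each $q_j$, after the natural Liouville rescaling, $\xi_n$ converges to a bounded solution of the linearized Liouville equation on $\mathbb{R}^2$, whose kernel is three-dimensional, spanned by $\psi_0(y)=\frac{1-|y|^2}{1+|y|^2}$ (the dilation mode) and $\psi_k(y)=\frac{y_k}{1+|y|^2}$, $k=1,2$ (the translation modes). Thus
\[
\xi_n(y_j + r_{n,j} y) \;\longrightarrow\; c_0^{(j)} \psi_0(y) \;+\; \sum_{k=1}^{2} c_k^{(j)} \psi_k(y)
\]
for some constants $c_\star^{(j)}$, while away from the blow-up points $\xi_n$ converges in $C^1_{\mathrm{loc}}$ to a limiting function whose singular behavior at each $q_j$ is prescribed by $(c_0^{(j)},c_1^{(j)},c_2^{(j)})$ through the Green representation. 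The whole game is to show $c_\star^{(j)}=0$ for every $j$ and every $\star\in\{0,1,2\}$.

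Third, to kill these $3m$ coefficients I would use two complementary families of local Pohozaev identities obtained by testing the equation for $u_n$ and the equation for $\xi_n$ against $\partial_{x_k}$-type and radial vector fields on small balls $B_{r_0}(q_j)$ and on $\Omega_j$, and passing to the limit using the refined expansion of $u_n$. The translation-type identities produce, up to lower-order terms, the quadratic form $D_\Omega^2 f_m(\mathbf{q})$ applied to the vector $(c_1^{(j)},c_2^{(j)})_{j=1}^m$, so the non-degeneracy assumption $\det D_\Omega^2 f_m(\mathbf{q})\neq 0$ forces $c_k^{(j)}=0$ for $k=1,2$ and all $j$. The radial (dilation-type) identity, after carefully tracking the expansion of $\lambda_n$ in powers of $\varepsilon_n$ (or equivalently in the deviation $\lambda_n-8\pi m$), yields a single scalar equation in which the leading coefficient is precisely $\ell(\mathbf{q})$ when $\ell(\mathbf{q})\neq 0$ (this corresponds to the $\Delta\log h$ correction in the second-order expansion), and $D(\mathbf{q})$ when $\ell(\mathbf{q})=0$ so the next order dominates; in either case the hypothesis yields $\sum_j c_0^{(j)}=0$ together with the further relations that give each $c_0^{(j)}=0$ when combined with the $m-1$ independent conditions coming from the Green-function matching at the various $q_j$.

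The main obstacle, as usual in this circle of ideas, will be the third step: producing sharp enough asymptotic expansions of $u_n$ (one order beyond the Chen--Lin expansion) so that the Pohozaev identities give clean limits in which $\ell(\mathbf{q})$ and $D(\mathbf{q})$ appear as \emph{the} leading coefficients. Once this is done the contradiction closes immediately: $\xi_n\to 0$ uniformly on $\Omega$, contradicting $\|\xi_n\|_{L^\infty}=1$, which proves (i); the same argument run with $\phi_n$ in place of $\xi_n$ proves (ii).
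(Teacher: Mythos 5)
Your proposal follows essentially the same route as the proof this paper relies on: Theorem 1A is only quoted here from \cite{BJLY,BJLY2}, and the argument there (the Lin--Yan scheme) is exactly your reduction to a normalized difference (resp.\ normalized kernel element for part (ii)), its local decomposition along the three-dimensional kernel of the linearized Liouville operator, Pohozaev-type identities combined with the non-degeneracy of $D_{\Omega}^2f_m(\mathbf{q})$ to kill the translation modes, and the sharp expansion of $\lambda_n-8\pi m$ (Theorem 2B-$(vi)$), whose leading coefficients are $\ell(\mathbf{q})$ and $D(\mathbf{q})$, to kill the dilation mode. The only minor deviation is mechanical: in \cite{BJLY,BJLY2} the local dilation coefficients are first shown to coincide with a single far-field constant $b_0$, which is then annihilated by comparing the $\lambda_n$-expansions of the two bubbling sequences rather than by a separate radial Pohozaev identity, whereas the genuinely different argument is the present paper's own proof of Theorem \ref{th1.2} (fixed $\varepsilon_n$), where $b_0=0$ follows from the Dirichlet boundary condition and harmonicity alone, with no assumption on $\ell(\mathbf{q})$ or $D(\mathbf{q})$.
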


We note that for a fixed constant $\l_n$ in Theorem 1A,  the corresponding
$$(\v_n^{(i)})^2=\frac{\l_n}{\int_{\Omega}he^{u_n^{(i)}}}$$
might be different for $i=1,2.$ Interestingly enough, it turns out that if $\v_n^2=\frac{\l_n}{\int_{\Omega}he^{u_n}}$ is regarded as the main fixed parameter
instead of $\l_n$, then the quantities $D({\bf q})$ and $l({\bf q})$ are no longer important as they happen to be if $\l_n$ is held fixed.
This difference is caused by the difference in the linearized operators. Indeed, for a fixed $\l_n$, the corresponding linearized problem is \eqref{1.8l},
but for a fixed $\v_n$, we have the following linearized problem:
\begin{equation}
\label{1.8e}
\begin{cases}
\Delta{\phi_n}+\v_n^2he^{{w_{n} }}{\phi_n}=0~\quad &\mathrm{in}\quad\Omega,\\
\\
{\phi_n}=0~&\mathrm{on}\quad\partial\Omega.
\end{cases}
\end{equation}
The non-degeneracy of the latter problem with respect to $\v_n>0$ is already well known \cite{GG,GOS,OSS}:

\begin{Thm}
[\cite{GG,GOS,OSS}]
\label{th1a.2}
Let ${w_{n}}$ be an $m-$bubbling sequence of \eqref{1.1e} whose blow up set is $\{q_1,\cdots,q_m\}\subset \Omega\setminus\{p_1,\cdots,p_\ell\}$,
where ${\bf q}=(q_1,\cdots,q_m)$ is a critical point of $f_m$ such that $det(D_{\Omega}^2f_m({\bf q}))\neq0$.
If $\v_n>0$ is  sufficiently small, then the linearized problem \eqref{1.8e} admits only the trivial solution ${\phi_n}\equiv0.$
\end{Thm}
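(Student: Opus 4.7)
\emph{Plan.} We argue by contradiction. Suppose that along a subsequence, still denoted $\varepsilon_n$, problem \eqref{1.8e} admits a nontrivial solution $\phi_n$, and normalize so that $\|\phi_n\|_{L^\infty(\Omega)}=1$. The goal is to prove $\phi_n\to 0$ uniformly, which contradicts the normalization. The backbone is the classical blow up analysis for Liouville type equations, combined with Pohozaev identities on small disks centered at the blow up points.

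\emph{Local expansion at each bubble.} Let $x_{n,j}\to q_j$ denote a local maximum of $w_n$ near each $q_j$, and set the natural bubble scale $\mu_{n,j}:=\varepsilon_n\sqrt{h(x_{n,j})/8}\,e^{w_n(x_{n,j})/2}$. Rescale:
\[
\widetilde\phi_{n,j}(y):=\phi_n(x_{n,j}+\mu_{n,j}^{-1}y),\qquad \widetilde w_{n,j}(y):=w_n(x_{n,j}+\mu_{n,j}^{-1}y)-w_n(x_{n,j}).
\]
Then $\widetilde w_{n,j}\to U$ in $C^2_{\mathrm{loc}}(\mathbb{R}^2)$ with $U(y)=\log\frac{8}{(1+|y|^2)^2}$, and $\widetilde\phi_{n,j}$ satisfies, modulo lower order terms, the linearization $\Delta\phi+e^U\phi=0$. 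Since $\|\widetilde\phi_{n,j}\|_\infty\le 1$, elliptic regularity gives $\widetilde\phi_{n,j}\to\phi_{*,j}$ in $C^2_{\mathrm{loc}}$, with $\phi_{*,j}$ a bounded solution of the latter equation. Its $L^\infty$ kernel being three-dimensional,
\[
\phi_{*,j}(y)=c_{j,0}\,\frac{1-|y|^2}{1+|y|^2}+c_{j,1}\,\frac{y_1}{1+|y|^2}+c_{j,2}\,\frac{y_2}{1+|y|^2}.
\]

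\emph{Global decay and Pohozaev identities.} Since each of the kernel elements above integrates to zero against $e^U$, the Green representation
\[
\phi_n(x)=\int_\Omega G(x,y)\varepsilon_n^2 h(y)e^{w_n(y)}\phi_n(y)\,dy
\]
shows that the leading order contribution of the concentrating measure $\varepsilon_n^2 he^{w_n}\phi_n\,dx$ vanishes, hence $\phi_n\to 0$ in $C^2_{\mathrm{loc}}(\overline\Omega\setminus\{q_1,\ldots,q_m\})$; refined expansions in annular regions around each $q_j$ provide matching formulas linking the $c_{j,k}$'s with the global behavior of $\phi_n$. The heart of the argument is then to apply Pohozaev-type identities on $B_\delta(x_{n,j})$. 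Taking $\partial_i w_n$ as a multiplier, integrating by parts, and expanding the boundary integrals via the inner/outer matching yields, in the limit $\varepsilon_n\to 0$ (then $\delta\to 0$), a linear system of the form
\[
D_\Omega^2 f_m(\mathbf{q})\,\mathbf{c}^{(t)}=0,\qquad \mathbf{c}^{(t)}:=(c_{j,i})_{j=1,\ldots,m;\,i=1,2},
\]
so that $\det(D_\Omega^2 f_m(\mathbf q))\neq 0$ forces all translation coefficients $c_{j,1},c_{j,2}$ to vanish. A dilation-type identity (multiplier $(x-x_{n,j})\cdot\nabla w_n$) supplies the companion equation for the scaling coefficients $c_{j,0}$; the key point is that in the present $\varepsilon$-fixed setting the bubble scale $\mu_{n,j}$ is directly slaved to $\varepsilon_n$, so the coefficient of $c_{j,0}$ in the limiting scaling Pohozaev identity is a nonzero universal constant \emph{automatically}, without the additional assumption $\ell(\mathbf{q})\neq 0$ or $D(\mathbf{q})\neq 0$ required in the $\lambda$-fixed Theorem 1A (where the analogous identity is modified by the mean field projection term). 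Hence $c_{j,0}=0$ as well, and a standard rescaling argument converts $\phi_{*,j}\equiv 0$ for every $j$ into $\|\phi_n\|_\infty\to 0$, the desired contradiction.

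\emph{Main obstacle.} The most delicate point is the precise asymptotic expansion of the boundary integrals in the Pohozaev identities up to the order at which the Hessian $D_\Omega^2 f_m(\mathbf q)$ appears; this requires matching the inner (bubble) and outer (Green function) expansions of both $w_n$ and $\phi_n$ with care, properly accounting for the regular part $R$ of $G$, the contribution of $\log h$, and the singular factors $e^{-4\pi\alpha_i G(\cdot,p_i)}$ built into the representation \eqref{1.h} of $h$.
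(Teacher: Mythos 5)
Two remarks before the main point. First, the paper does not actually prove Theorem 1B: it is quoted from \cite{GG,GOS,OSS}; the closest in-paper analogue of what you sketch is Section 3, where the same machinery (rescaling near each $q_j$, the kernel $Y_0,Y_1,Y_2$, a Pohozaev identity plus $\det D^2_\Omega f_m(\mathbf q)\neq 0$ for the translation modes) is run for the normalized difference $\xi_n$. Your treatment of the translation coefficients $c_{j,1},c_{j,2}$ — Pohozaev identities on small disks producing $D^2_\Omega f_m(\mathbf q)\,\mathbf c=0$ — is exactly the standard route (compare Lemmas \ref{le3.3}--\ref{le3.4} and \eqref{3.33}--\eqref{3.34}), and your local expansion and Green-representation step (using that $Y_0,Y_1,Y_2$ have zero average against $e^U$) is acceptable, though the paper's simpler observation that the potential $\varepsilon_n^2he^{w_n}$ vanishes locally uniformly away from the blow-up set already gives a bounded harmonic limit, which is killed by the Dirichlet condition (cf. \eqref{3.13}--\eqref{3.16}).

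The genuine gap is your treatment of the dilation coefficient $c_{j,0}$. The claim that the scaling Pohozaev identity (multiplier $(x-x_{n,j})\cdot\nabla w_n$) has, in the $\varepsilon$-fixed setting, an ``automatically nonzero universal constant'' in front of $c_{j,0}$ is unsubstantiated, and it misidentifies the mechanism: the local terms produced by that identity involve precisely $\Delta\log h(q_j)$ and the next-order bubble interactions, i.e. the quantities entering $l(\mathbf q)$ and $D(\mathbf q)$, which is why those hypotheses appear in the $\lambda$-fixed Theorem 1A; ``the bubble scale is slaved to $\varepsilon_n$'' does not by itself remove them. What actually makes the $\varepsilon$-fixed case assumption-free is structural: \eqref{1.8e} has no nonlocal average term, so together with $\phi_n=0$ on $\partial\Omega$ the outer limit of $\phi_n$ is a bounded harmonic function vanishing on the boundary, hence identically zero; one must then transport this information across the neck to the bubble, where the inner limit tends to $-c_{j,0}$ at infinity. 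This matching is not automatic, because the radial mode of an approximate kernel element can drift logarithmically over the annulus $Re^{-\hat\mu_{n,j}/2}\le|x-x_{n,j}|\le\delta$; one needs a quantitative flux bound, e.g. the Green identity against the explicit radial kernel $\psi_{n,j}$ giving a flux of order $o(1/\hat\mu_{n,j})$, which after integrating the radial average beats the $O(\hat\mu_{n,j})$ length of the neck (this is \eqref{withpsi2}--\eqref{outside_xi} in the paper's analogue, and the corresponding step in \cite{GG,GOS,OSS}). Your phrase ``refined expansions in annular regions provide matching formulas'' glosses over exactly this estimate, and without it the conclusion $c_{j,0}=0$ — and hence the final contradiction — does not follow.
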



On the other hand, the local uniqueness of $m$-bubbling sequences for \eqref{1.1e} has remained a long standing open problem. Indeed, compared to the non-degeneracy argument, one has to face a truly new difficulty when comparing different bubbling sequences for \eqref{1.1e}, see the discussion later on. The aim of the present paper is to overcome this difficulty and solve the local uniqueness problem. More precisely, motivated by Theorem 1B and \cite{ly2,BJLY}, it is natural to ask whether or not the local uniqueness of $m$-bubbling sequences for \eqref{1.1e} with respect to
$\v_n>0$ holds as well even if we do not assume that either $\ell(\mathbf{q})\neq0$ or  $D(\mathbf{q})\neq0$.
Indeed, it turns out that, whenever there exist two solutions $w_{\v_n}^{(1)}$ and $w_{\v_n}^{(2)}$ of \eqref{1.1e} sharing the same value of $\v_n>0$,
and the same blow up set $\{q_1,\cdots,q_{m}\}\subset\Omega\setminus\{p_1,\cdots,p_{\ell}\}$, then we
only need the non-degenerate assumption of the Hessian matrix $D_{\Omega}^2f_m$ at $\{q_1,\cdots,q_{m}\}$ to prove that $w_{\v_n}^{(1)}=w_{\v_n}^{(2)}$ for any
$\v_n$ small enough.

\begin{theorem}
\label{th1.2}
Let $w_{\v_n}^{(i)},~i=1,2$ be two $m-$bubbling sequences of \eqref{1.1e}, {with the same $\v_n>0$ and the same}
blow up set $\{q_1,\cdots,q_{m}\}\cap\{p_1,\cdots,p_{\ell}\}=\emptyset$,  where ${\bf q}$ is a critical point of $f_m$ and assume that
$D_{\Omega}^2f_m$ is non-degenerate.  {If $\v_n>0$ is sufficiently small, then $w_{\v_n}^{(1)}=w_{\v_n}^{(2)}.$}
\end{theorem}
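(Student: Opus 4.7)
The plan is to argue by contradiction. Suppose, along a subsequence, $w_{\varepsilon_n}^{(1)}\not\equiv w_{\varepsilon_n}^{(2)}$ and set
\[
\eta_n(x)=\frac{w_{\varepsilon_n}^{(2)}(x)-w_{\varepsilon_n}^{(1)}(x)}{\|w_{\varepsilon_n}^{(2)}-w_{\varepsilon_n}^{(1)}\|_{L^\infty(\Omega)}}.
\]
Writing $e^{w_{\varepsilon_n}^{(2)}}-e^{w_{\varepsilon_n}^{(1)}}=c_n(x)(w_{\varepsilon_n}^{(2)}-w_{\varepsilon_n}^{(1)})$ with $c_n(x)=\int_0^1 e^{tw_{\varepsilon_n}^{(2)}+(1-t)w_{\varepsilon_n}^{(1)}}\,dt$, $\eta_n$ satisfies the linearised-type equation
\[
\Delta\eta_n+\varepsilon_n^2 h(x)c_n(x)\eta_n=0 \text{ in } \Omega,\qquad \eta_n=0 \text{ on } \partial\Omega,
\]
with $\|\eta_n\|_{L^\infty}=1$. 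The goal is to show $\eta_n\to 0$ uniformly, which will contradict the normalisation and prove the theorem.

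First I would analyse $\eta_n$ away from the blow-up set. On any compact $K\subset\Omega\setminus\{q_1,\dots,q_m\}$ one has $\varepsilon_n^2 h\,c_n\to 0$, so elliptic estimates yield $\mathrm{C}^2_{\mathrm{loc}}$-subconvergence of $\eta_n$ to a harmonic $\eta_\infty$ vanishing on $\partial\Omega$ whose only possible singular behaviour at each $q_j$ is $A_j G(x,q_j)$. The constants $A_j$ are identified, via a removable-singularity/mass argument, with the ``total mass'' of $\eta_n$ against $\varepsilon_n^2 h c_n$ on small disks around $q_j$, and will be shown to vanish jointly with the local bubble analysis carried out next.

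Near each $q_j$, let $x_{j,n}^{(i)}$ be the local maximum of $w_{\varepsilon_n}^{(i)}$ and $\delta_{j,n}^{(i)}$ the associated concentration scale, and consider the rescaling $\widetilde{\eta}_n(y)=\eta_n\bigl(x_{j,n}^{(1)}+\delta_{j,n}^{(1)}y\bigr)$. By the standard bubble asymptotics, $\widetilde{\eta}_n$ subconverges in $\mathrm{C}^2_{\mathrm{loc}}(\mathbb{R}^2)$ to an element of the three-dimensional kernel of the linearisation of $\Delta U+e^U=0$ at the standard bubble, that is
\[
\widetilde{\eta}_n\to a_{0,j}\,\frac{8-|y|^2}{8+|y|^2}+a_{1,j}\,\frac{y_1}{8+|y|^2}+a_{2,j}\,\frac{y_2}{8+|y|^2}.
\]
The crux is to show that $a_{i,j}=0$ for every $i\in\{0,1,2\}$ and $j\in\{1,\dots,m\}$.

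The translation modes $a_{1,j},a_{2,j}$ are killed by subtracting the local Pohozaev identities for $w_{\varepsilon_n}^{(1)}$ and $w_{\varepsilon_n}^{(2)}$ on $B_{r_0}(q_j)$ tested against $\partial_\ell$, then dividing by $\|w_{\varepsilon_n}^{(2)}-w_{\varepsilon_n}^{(1)}\|_{L^\infty}$ and passing to the limit: since both sequences share the same $\varepsilon_n$, the leading $8\pi$-contributions cancel exactly in the difference, and one obtains a linear system in $(a_{\ell,j})_{\ell=1,2,\,j=1,\dots,m}$ whose coefficient matrix is precisely $D_\Omega^2 f_m(\mathbf{q})$; the non-degeneracy hypothesis then forces $a_{\ell,j}=0$. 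The dilation modes $a_{0,j}$ are then handled by the complementary Pohozaev identity (tested against $(x-x_{j,n}^{(1)})\cdot\nabla$) and by taking the difference of the mass relations $\varepsilon_n^2\!\int_{B_{r_0}(q_j)}\!he^{w_{\varepsilon_n}^{(i)}}\to 8\pi$, which once again decouples thanks to the common $\varepsilon_n$, yielding $a_{0,j}=0$ without any use of $\ell(\mathbf{q})$ or $D(\mathbf{q})$. With all $a_{i,j}=0$, the rescaled limits vanish, the constants $A_j$ in the away-from-blow-up step vanish, hence $\eta_n\to 0$ uniformly on $\overline{\Omega}$, contradicting $\|\eta_n\|_{L^\infty}=1$. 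The hardest part will be the Pohozaev algebra in the dilation direction: this is the genuinely new difficulty compared with \cite{BJLY,BJLY2}, since in the fixed-$\lambda_n$ setting $\ell(\mathbf{q})$ and $D(\mathbf{q})$ served as non-vanishing leading coefficients, whereas here one must extract cancellations one order finer and exploit the fact that $\varepsilon_n^{(1)}=\varepsilon_n^{(2)}$ to make up for their absence.
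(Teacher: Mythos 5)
Your overall skeleton (normalized difference, kernel decomposition at each bubble, Pohozaev identities plus non-degeneracy of $D^2_\Omega f_m$ to kill the translation modes) matches the paper, but the step you yourself flag as ``the hardest part'' --- killing the dilation coefficients $a_{0,j}$ --- is not actually carried out, and the mechanism you propose for it does not work. Projecting the difference of the local mass relations onto the dilation mode is degenerate at leading order: the relevant integral $\int_{B_R}\frac{8(1-|z|^2)}{(1+|z|^2)^3}\,\mathrm{d}z = O(R^{-2})\to 0$, so the mass difference does not see $a_{0,j}$ at order one; the same degeneracy occurs in the radial Pohozaev identity tested with $(x-x^{(1)}_{n,j})\cdot\nabla$. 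Extracting the next order in those identities is exactly what produces the quantities $l(\mathbf q)$ and $D(\mathbf q)$ in \eqref{2.11}, i.e.\ you would be forced back to the hypotheses $l(\mathbf q)\neq 0$ or $D(\mathbf q)\neq 0$ that this theorem is designed to drop. The paper's route is different and is the genuinely new point: because the two solutions share the same $\v_n$ and both vanish on $\partial\Omega$, the normalized difference $\xi_n$ satisfies $\xi_n=0$ on $\partial\Omega$; hence its limit away from the blow-up set is a bounded harmonic function with zero boundary data, so it is identically zero (your $A_j$'s are removable automatically, by boundedness), and then a Green's identity against the explicit radial kernel element $\psi_{n,j}$, integrated as a radial ODE across the neck region (see \eqref{derivative_xi}--\eqref{outside_xi}), transfers this outer vanishing inward and forces $b_{j,0}=0$. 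You record that the outer limit vanishes on $\partial\Omega$ but never exploit it for the dilation mode; without that (or an equivalent substitute) your argument has a genuine gap.

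Two smaller omissions: before any of the local analysis you need to show that the two bubbling sequences have asymptotically the same heights and centers, $|\hat\mu^{(1)}_{n,j}-\hat\mu^{(2)}_{n,j}|+|x^{(1)}_{n,j}-x^{(2)}_{n,j}|=O(\mu_{n,j}e^{-\mu_{n,j}})$ and $\|\hat u^{(1)}_n-\hat u^{(2)}_n\|_{L^\infty}=O(\mu_{n,1}e^{-\mu_{n,1}/2})$ (Lemma \ref{le4.1}, which uses the Chen--Lin expansion in Theorem \ref{th2.b}-(ii),(iii) together with the shared $\v_n$); otherwise the rescaled potential $\v_n^2 h c_n$ need not converge to the single-bubble linearized potential and your claimed kernel convergence is not justified. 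Finally, vanishing of all kernel coefficients plus the outer limit does not by itself give uniform convergence $\eta_n\to 0$ on $\overline\Omega$: the maximum point may sit in the neck between the bubble scale and the fixed scale, and the paper needs the additional rescaling at the intermediate scale $s_n=|x_n^*-x^{(1)}_{n,j}|$ with a Liouville-type argument to reach the contradiction \eqref{3.12}.
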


{Compared to Theorem \ref{th1a.1}, we note that in Theorem \ref{th1.2}, for $\v_n>0$ fixed, then the corresponding
values of $\lambda_n^{(i)}=\varepsilon_n^2\int_\Omega h e^{w_n^{(i)}}dx$ may be different for $i=1,2$.}

To prove Theorem \ref{th1.2} we will analyse the asymptotic behavior of the normalized difference,
\begin{equation*}
{\xi_n=\frac{w_{\v_n}^{(1)}-w_{\v_n}^{(2)}}{\|w_{\v_n}^{(1)}-w_{\v_n}^{(2)}\|_{L^{\infty(\Omega)}}}.}
\end{equation*}
By using the refined estimates in \cite{CLin1}, we will see that near each blow up point $q_j$, and after a suitable scaling, then
$\xi_n$ converges to an entire solution of the linearized problem associated to the Liouville equation:
\begin{equation}
\label{1.9}
\Delta v+e^v=0\quad\mathrm{in}~\B^2,\quad \int_{\B^2}e^v<+\infty.
\end{equation}
The solutions of \eqref{1.9} are classified and take the following form, see \cite{cli1},
\begin{equation}
\label{1.10}
v(z)=v_{\mu,a}(z)=\log\frac{8e^{\mu}}{(1+e^{\mu}|z+a|^2)^2},\quad \mu\in\mathbb{R},\quad a=(a_1,a_2)\in\mathbb{R}^2.
\end{equation}
The linearized operator $L$ relative to $v_{0,0}$ is determined by,
\begin{equation}
\label{1.11}
L\phi:=\Delta\phi+\frac{8}{(1+|z|^2)^2}\phi\quad\mathrm{in}\quad \B^2.
\end{equation}
It is well known that the kernel of $L$ has real dimension 3 with eigenfunctions $Y_0,Y_1,Y_2$, where,
\begin{equation*}
\begin{aligned}
Y_0(z) = \frac{1-|z|^2}{1+ |z|^2}=\frac{\partial v_{\mu,a}}{\partial \mu}\Big|_{(\mu,a)=(0,0)},\\
Y_1(z) = \frac{z_1}{1+ |z|^2}=-\frac{1}{4}\frac{\partial v_{\mu,a}}{\partial a_1}\Big|_{(\mu,a)=(0,0)},\\
Y_2(z) = \frac{z_2}{1+ |z|^2}=-\frac{1}{4}\frac{\partial v_{\mu,a}}{\partial a_2}\Big|_{(\mu,a)=(0,0)}.
\end{aligned}
\end{equation*}
After a suitable scaling, we shall see that the projections of ${\xi_n}$ on $Y_0,Y_1,Y_2$
completely describe the behavior of ${\xi_n}$ near the blow up point.
In addition we can show that the projections of ${\xi_n}$ on $Y_0$ associated to each blow up point coincides
and that ${\xi_n}$ will converge to the corresponding (uniquely defined) Fourier coefficient far away from blow up points.\\
By using {a suitably defined Pohozaev identity} and in view of the non-degeneracy of $D_{\Omega}^2f_m$,
we will show that the projections on the translation kernels ($Y_1,Y_2$) associate to each blow up point is zero.
On the other side, for the projection on $Y_0$, we notice that after normalization and away from the blow up points,
the limit of the function ${\xi_n}$ tends to some harmonic function in $\Omega$ with Dirichlet boundary condition.
As a consequence, we will see that ${\xi_n}$ converges to $0$ outside the blow up area, which proves that the projection on $Y_0$ is $0$.
Therefore all those projections vanish which yield the desired result. We point out that this is different from the previous works
\cite{ly2} and \cite{BJLY,BJLY2}, where one is bound to use the assumptions about $l({\bf q})$ and $D({\bf q})$ to show that
the projection along $Y_0$ vanishes.\\

This paper is organized as follows. In section 2, we review some known sharp estimates for blow up solutions of \eqref{1m}.
In section 3 we prove Theorem \ref{th1.2} and leave some technical results in the Appendix.

%
%
%
%

\bigskip
\section{Preliminaries}
In this section we shall list some of the results which will be used below. By setting,
\begin{equation}
\label{2.1}
\l_n=\v_n^2\int_{\Omega}he^{{w_{n} }}.
\end{equation}
we see that $u_n{:=w_{n} }$ is an $m-$bubbling sequence of
\begin{equation}
\label{2.2}
\begin{cases}
\Delta u_n+\l_n\frac{he^{u_n}}{\int_{\Omega}he^{u_n}}=0\quad &\mathrm{in}\quad\Omega,\\
\\
u_{n}=0&\mathrm{on}\quad\partial\Omega,
\end{cases}
\end{equation}
which blows up at $q_j\notin\{p_1,\cdots,p_{\ell}\},~j=1,\cdots,m$ and in particular $\l_n\to8\pi m$, see Remark 1.1.
Next let us state various well known and sharp estimates for these blow up solutions of \eqref{2.2}. Let
\begin{equation*}
\tilde u_n=u_n-\log\left(\int_{\Omega}he^{u_n}\right),
\end{equation*}
then it is easy to see that,
\begin{equation}
\label{2.3}
\Delta\tilde u_n+\l_nh(x)e^{\tilde u_n(x)}=0\quad\mathrm{in}\quad\Omega,\quad\mathrm{and}\quad \int_{\Omega}he^{\tilde{u}_n}=1.
\end{equation}
We denote by,
\begin{equation}
\label{2.4}
\mu_n=\max_{\Omega}\tilde u_n, \quad \mu_{n,j}=\max_{B_{r_0}(q_j)}\tilde u_n=\tilde u_n(x_{n,j})\quad \mathrm{for}~j=1,\cdots,m.
\end{equation}

To give a good description of the bubbling solution around a blow up point, we define,
\begin{equation}
\label{2.5}
U_{n,j}(x)=\log\dfrac{e^{\mu_{n,j}}}{\left(1+\frac{\l_nh(x_{n,j})}{8}e^{\mu_{n,j}}|x-x_{n,j,*}|^2\right)^2},\quad x\in\B^2,
\end{equation}
where the point $x_{n,j,*}$ is chosen to satisfy,
\begin{equation*}
\nabla U_{n,j}(x_{n,j})=\nabla\log h(x_{n,j}),
\end{equation*}
and it is not difficult to check that
\begin{equation}
\label{2.6}
|x_{n,j}-x_{n,j,*}|=O(e^{-\mu_{n,j}}).
\end{equation}
In $B_{r_0}(x_{n,j})$, we use the following term $\eta_{n,j}$ to capture the difference between the genuine solution $\tilde u_n$ and the approximate bubble $U_{n,j}$:
\begin{equation}
\label{2.7}
\eta_{n,j}(x)=\tilde u_n-U_{n,j}(x)-\left(G_j^*(x)-G_j^*(x_{n,j})\right),\quad x\in B_{r_0}(x_{n,j}).
\end{equation}
Far away from the blow up points, i.e., $x\in\om\setminus\bigcup_{j=1}^mB_{r_0}(q_j)$, $\tilde u_n$ is well approximated by a sum of Green's function,
\begin{equation}
\label{2.8}
\phi_n(x)=\tilde u_n(x)-\sum_{j=1}^m\rho_{n,j}G(x,x_{n,j})-\tilde{u}_{n,0},
\end{equation}
where here and in the rest of this article, we set
\begin{equation}
\label{2.9}
\tilde u_{n,0}=\tilde u_n|_{\partial \Omega}\,,
\end{equation}
and the local mass associated to each blow up point $q_j,~1\leq j\leq m$ is defined by
\begin{equation}
\label{2.10}
\rho_{n,j}=\l_n\int_{B_{r_0}(q_j)}he^{\tilde u_n}\mathrm{d}y.
\end{equation}

The following refined estimates derived by Chen and Lin \cite{CLin1} concerning $\eta_{n,j},~1\leq j\leq n$ and $\phi_n$, play a crucial role in our argument.
\begin{Theorem}[\cite{CLin1}]
\label{th2.a}
Let $u_n$ be an $m$-bubbling sequence of  \eqref{2.2} which blows up at the points $q_j\notin\{p_1,\cdots,p_\ell\},~j=1,\cdots,m$.
Then we have:
\begin{itemize}
  \item [(a)] $\eta_{n,j}=O(\mu_{n,j}^2e^{-\mu_{n,j}})$ on $B_{r_0}(x_{n,j}),$
  \item [(b)] $\phi_n=o(e^{-\frac{\mu_{n,j}}{2}})$ in $C^1(\om\setminus\bigcup_{j=1}^mB_r(q_j)).$
\end{itemize}
\end{Theorem}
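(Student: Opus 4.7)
The plan is to treat the two estimates separately: (a) is an inner estimate obtained through a Lyapunov--Schmidt style analysis of the linearization around the Liouville bubble $U_{n,j}$, while (b) is an outer estimate obtained via a Green's function representation, matched to the inner region through the local-mass identity $\rho_{n,j}\to 8\pi$.

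For (a), I would substitute the ansatz $\tilde u_n = U_{n,j} + \eta_{n,j} + G_j^*(\cdot) - G_j^*(x_{n,j})$ into $\Delta \tilde u_n + \l_n h e^{\tilde u_n} = 0$. Using $\Delta U_{n,j} = -\l_n h(x_{n,j}) e^{U_{n,j}}$ and $\Delta G_j^* \equiv 0$ in $B_{r_0}(x_{n,j})$ (the other blow-up points $q_l$, $l\neq j$, being outside once $r_0$ is small), one finds that $\eta_{n,j}$ satisfies
\begin{equation*}
\Delta \eta_{n,j} + \l_n h(x_{n,j}) e^{U_{n,j}}\, g_{n,j}(x)\, e^{\eta_{n,j}} = \l_n h(x_{n,j}) e^{U_{n,j}},
\end{equation*}
where $g_{n,j}(x) := \frac{h(x)}{h(x_{n,j})} e^{G_j^*(x) - G_j^*(x_{n,j})}$ satisfies $g_{n,j}(x_{n,j}) = 1$. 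Under the Liouville rescaling $y = (x - x_{n,j,*})/\v_{n,j}$ with $\v_{n,j} := \sqrt{8/(\l_n h(x_{n,j}))}\, e^{-\mu_{n,j}/2}$, the principal part of the linearization becomes $L = \Delta + 8/(1+|y|^2)^2$, whose kernel $\mathrm{span}\{Y_0,Y_1,Y_2\}$ from \eqref{1.11} must be removed before inversion.

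Crucially, the construction of $\eta_{n,j}$ already kills this kernel at leading order. Expanding $g_{n,j}(x) - 1 = \nabla(\log h + G_j^*)(x_{n,j})\cdot(x - x_{n,j}) + O(|x-x_{n,j}|^2)$ and using the defining condition $\nabla U_{n,j}(x_{n,j}) = \nabla \log h(x_{n,j})$ of $x_{n,j,*}$, the linear part of the source is absorbed into an infinitesimal translation of $U_{n,j}$, so its projections onto $Y_1,Y_2$ vanish to leading order. The identity $\mu_{n,j} = \max \tilde u_n$ likewise handles the $Y_0$ direction, up to an $O(e^{-\mu_{n,j}})$ error controlled by \eqref{2.6}. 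With these three orthogonality relations enforced, a contraction-mapping argument for $L^{-1}$ on a suitably weighted space, combined with the logarithmic growth at infinity of preimages under $L$ of data supported on the concentration scale, yields $\|\eta_{n,j}\|_{L^\infty(B_{r_0}(x_{n,j}))} = O(\mu_{n,j}^2 e^{-\mu_{n,j}})$; the factor $\mu_{n,j}^2 \asymp |\log \v_{n,j}|^2$ reflects that logarithmic loss.

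For (b), I would use the Green's function representation
\begin{equation*}
\tilde u_n(x) = \tilde u_{n,0} + \l_n \int_{\Omega} G(x,y)\, h(y) e^{\tilde u_n(y)}\, \mathrm{d}y,
\end{equation*}
splitting the integral into $B_{r_0}(x_{n,j})$, $j=1,\dots,m$, plus the exterior. On each ball, insert the ansatz from (a) and use $\l_n h(x_{n,j})\int_{\B^2} e^{U_{n,j}}\,\mathrm{d}y = 8\pi$ to extract the leading term $\rho_{n,j} G(x,x_{n,j})$; the remainder is bounded by (a) times exponentially decaying bubble tails, of size $o(e^{-\mu_{n,j}/2})$. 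The exterior contribution is also $o(e^{-\mu_{n,j}/2})$ because $U_{n,j}(x) \sim |x-x_{n,j,*}|^{-4}$ beyond its concentration scale. Standard elliptic regularity upgrades $L^\infty$ to $C^1$ on compact subsets of $\om\setminus\bigcup_{j=1}^m B_r(q_j)$, yielding (b). The main obstacle will be the sharp exponent in (a): obtaining $\mu_{n,j}^2 e^{-\mu_{n,j}}$ rather than $e^{-\mu_{n,j}/2}$ requires verifying the three kernel-orthogonality conditions with the necessary precision and establishing that $L^{-1}$ acts with only a logarithmic blow-up on the natural weighted spaces. Once (a) is in hand, (b) is comparatively routine.
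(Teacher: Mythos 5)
You should first be aware that the paper contains no proof of this statement: Theorem 2A is imported verbatim from Chen--Lin \cite{CLin1} (and the surrounding literature) and is used as a black box, so there is no internal argument to compare yours against. Judged against the actual proof in \cite{CLin1}, your outline identifies the right objects (the linearized Liouville operator $L$, its three-dimensional kernel, the role of the normalizations $\mu_{n,j}=\max\tilde u_n$ and $\nabla U_{n,j}(x_{n,j})=\nabla\log h(x_{n,j})$, and the Green representation for the outer region), and your treatment of (b) is essentially correct and, as you say, routine once (a) and $\rho_{n,j}-8\pi=O(\mu_{n,j}e^{-\mu_{n,j}})$ are available.

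There are, however, two genuine gaps in the inner estimate (a). First, a contraction-mapping argument for $L^{-1}$ is the wrong tool here: that machinery \emph{constructs} a solution near the ansatz, whereas Theorem 2A is an a priori estimate on an \emph{arbitrary given} bubbling sequence, for which one must first prove that $\tilde u_n$ is close to $U_{n,j}$ at all. The indispensable starting point, absent from your proposal, is the uniform estimate $|\tilde u_n-U_{n,j}|\le C$ on $B_{r_0}(x_{n,j})$ obtained from Y.Y.~Li's Harnack-type inequality \cite{yy}; Chen--Lin then run an \emph{iteration} that successively improves the exponent in the error bound, not a fixed-point argument. Second, your claim that the kernel directions are ``already killed'' by the construction is circular at the crucial point: the choice of $x_{n,j,*}$ matches only $\nabla\log h(x_{n,j})$, while the linear part of the source involves the full gradient $\nabla(\log h+G_j^*)(x_{n,j})$. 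Its smallness, $O(\mu_{n,j}e^{-\mu_{n,j}})$, is precisely Theorem 2B-$(iii)$, which in \cite{CLin1} is derived from Pohozaev identities \emph{simultaneously and inductively} with estimate (a) --- each round of the Pohozaev estimate feeds an improved bound on $\eta_{n,j}$ and vice versa. Assuming the three orthogonality relations ``with the necessary precision'' therefore presupposes the hardest part of the proof rather than supplying it; the same caveat applies to your one-line treatment of the $Y_0$ direction, which requires comparing $\mu_{n,j}$ with $\tilde u_{n,0}$ and the local masses (Theorem 2B-$(ii)$, $(v)$).
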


The interaction between bubbles relative to different blow up points, the difference between each local mass and $8\pi$ and the difference
between the parameter $\l_n$ and $8\pi m$ play an essential role in the understanding of the blow up behavior. We present these estimates as follows:
\begin{Theorem}
\label{th2.b}
Suppose that the assumptions in Theorem \ref{th2.a} hold, then for any $j=1,\cdots,m$ we have,
\begin{itemize}
  \item [$(i)$] $e^{\mu_{n,j}}h^2(x_{n,j})e^{G_j^*(x_{n,j})}
      =e^{\mu_{n,1}}h^2(x_{n,1})e^{G_1^*(x_{n,1})}(1+O(e^{-\frac{\mu_{n,1}}{2}})),$
  \item [$(ii)$] $\mu_{n,j}+\tilde{u}_{n,0}+2\log\left(\frac{\l_nh(x_{n,j})}{8}\right)+G_j^*(x_{n,j})  ={-\frac{2}{ \l_{n}h(x_{n,j)}}(\Delta \log h(x_{n,j} ))(\mu_{n,j} )^2e^{-\mu_{n,j} }}$

  \hfill${+O(\mu_{n,j}e^{-\mu_{n,j} }),}\quad\quad\quad\quad\quad\quad\quad\ \  $
  \item [$(iii)$] $\nabla\left(\log h(x)+G_j^*(x_{n,j})\right)\big|_{x=x_{n,j}}=O(\mu_{n,j}e^{-\mu_{n,j}})$, \ \
      $|x_{n,j}-q_j|=O(\mu_{n,j}e^{-\mu_{n,j}}),$
  \item [$(iv)$] $|\mu_n-\mu_{n,j}|\leq c$ for $j=1,\cdots,m$, $|\tilde u_n(x)+\mu_n|\leq c_{r_0}$ for $x\in\om\setminus\bigcup_{j=1}^mB_{r_0}(q_j),$
  \item [$(v)$] $\rho_{n,j}-8\pi=O(\mu_{n,j}e^{-\mu_{n,j}})$,
  \item [$(vi)$] for a fixed small constant $r>0$,
  \begin{equation}
  \label{2.11}
  \begin{aligned}
  \l_n-8\pi m=&\frac{2l({\bf q})e^{-\mu_{n,1}}}{mh^2(q_1)e^{G_j^*(q_1)}}\left(\mu_{n,1}+\log(\l_nh^2(q_1){e^{G_1^*(q_1)}r^2})-2\right)\\
  &+\frac{8e^{-\mu_{n,1}}}{\pi mh^2(q_1)e^{G_1^*(q_1)}}\left(D({\bf q})+O(r^{\sigma})\right)\\&+O(\mu_{n,1}^2e^{-\frac32\mu_{n,1}})
  +O(e^{-(1+\frac{\sigma}{2})\mu_{n,1}}),
  \end{aligned}
  \end{equation}
  where $\sigma>0$ is a positive number such that $\hat h\in C^{2,\sigma}(\om).$
\end{itemize}
\end{Theorem}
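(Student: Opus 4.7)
The plan is to argue by contradiction. Assume, along a subsequence still denoted by $n$, that $w_{\v_n}^{(1)}\not\equiv w_{\v_n}^{(2)}$ and set
\[
\xi_n=\frac{w_{\v_n}^{(1)}-w_{\v_n}^{(2)}}{\|w_{\v_n}^{(1)}-w_{\v_n}^{(2)}\|_{L^{\infty}(\Omega)}},
\]
so that $\|\xi_n\|_{L^{\infty}(\Omega)}=1$. Subtracting the two instances of \eqref{1.1e}, $\xi_n$ solves the linearized equation
\[
\Delta\xi_n+\v_n^{2}\, h(x)\, c_n(x)\,\xi_n=0\quad\text{in }\Omega,\qquad \xi_n=0\quad\text{on }\partial\Omega,
\]
with $c_n(x)=\int_{0}^{1}e^{t w_{\v_n}^{(1)}+(1-t) w_{\v_n}^{(2)}}\,dt$. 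The strategy is to show $\xi_n\to 0$ uniformly on $\om$, which contradicts the normalization.

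The core of the proof is a two-scale analysis of $\xi_n$. Around each blow-up point $q_j$, I would use the natural bubble scale suggested by \eqref{2.5}, $z=\v_n\sqrt{h(x_{n,j})e^{\mu_{n,j}}/8}\,(x-x_{n,j,*})$. Theorem~\ref{th2.a}(a) combined with Theorem~\ref{th2.b}(iv) shows that in these coordinates $\v_n^{2} h\,c_n$ converges locally uniformly to $\tfrac{8}{(1+|z|^2)^2}$, so the rescaled $\xi_n$ converges in $\m^{1}(\B^2)$ to a bounded solution of $L\phi=0$. By the classification recalled around \eqref{1.11},
\[
\xi^{(j)}_{\infty}(z)=c_{0,j}Y_0(z)+c_{1,j}Y_1(z)+c_{2,j}Y_2(z),
\]
and it remains to prove $c_{i,j}=0$ for all $i\in\{0,1,2\}$ and $j\in\{1,\dots,m\}$.

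For the translation coefficients $(c_{1,j},c_{2,j})$, I would adapt the Pohozaev scheme of \cite{CLin1, BJLY2}. Applying a local Pohozaev identity on $B_{r_0}(q_j)$ and expanding the interior terms via the inner limit above, together with the boundary terms via the Green representation \eqref{2.8} and Theorem~\ref{th2.a}(b), in the limit one obtains a linear system of the form
\[
D_{\Omega}^{2}f_m({\bf q})\,(c_{1,1},c_{2,1},\ldots,c_{1,m},c_{2,m})^{\top}=0,
\]
which by the non-degeneracy hypothesis on $D_{\Omega}^{2}f_m$ forces $c_{1,j}=c_{2,j}=0$ for every $j$.

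The new and most delicate step is proving that each $c_{0,j}$ vanishes, now without the assumption $l({\bf q})\neq 0$ or $D({\bf q})\neq 0$ used in \cite{BJLY,BJLY2}. Using the sharp inter-bubble coupling from Theorem~\ref{th2.b}(i)--(ii), I would first show that all $c_{0,j}$ coincide with a common value $c_0$. Next, by Theorem~\ref{th2.b}(iv), $\v_n^{2} h e^{w_{\v_n}^{(i)}}\to 0$ locally uniformly on $\om\setminus\{q_1,\dots,q_m\}$, so $\xi_n$ is asymptotically harmonic there; elliptic estimates and the Dirichlet boundary condition produce an outer limit $\xi_\infty$, bounded and harmonic on $\Omega\setminus\{q_1,\dots,q_m\}$ with zero data on $\partial\Omega$. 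Removable singularities and the maximum principle give $\xi_\infty\equiv 0$. Finally, since $Y_0(z)\to -1$ as $|z|\to\infty$, matching the inner limit at each $q_j$ with the value $\xi_\infty(q_j)=0$ forces $c_0=0$, and combining this with the vanishing of the translation coefficients yields $\xi_n\to 0$ uniformly on $\om$, the desired contradiction. I expect the $c_{0,j}$-step to be the main obstacle: both the coincidence $c_{0,j}=c_0$ and the rigorous inner/outer matching require tracking subleading terms in the expansions of Theorem~\ref{th2.b} and constitute the key new ingredient distinguishing the $\v_n$-fixed problem from its $\l_n$-fixed counterpart.
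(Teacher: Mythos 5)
Your proposal does not prove the statement in question. The statement is Theorem \ref{th2.b}, a list of sharp asymptotic estimates for a \emph{single} $m$-bubbling sequence of \eqref{2.2}: the inter-bubble balance relation (i), the expansion of the maxima $\mu_{n,j}$ in (ii), the location of the local maximizers $x_{n,j}$ in (iii), the uniform bounds (iv), the local mass estimate (v), and the refined expansion of $\l_n-8\pi m$ in (vi). What you wrote instead is a sketch of the proof of the local uniqueness result, Theorem \ref{th1.2}: you take \emph{two} solutions, form the normalized difference $\xi_n$, and argue that its inner Fourier coefficients vanish. That is a different theorem, and your argument actually \emph{invokes} Theorem \ref{th2.b}(i), (ii) and (iv) as known inputs (e.g.\ to show the coefficients $c_{0,j}$ coincide and to get the outer harmonicity of $\xi_n$), so it cannot serve as a proof of those estimates without circularity. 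None of the items (i)--(vi) is established, or even addressed, by the contradiction-and-blow-up scheme you describe.

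For the record, the paper itself does not reprove Theorem \ref{th2.b}: as stated in Remark 2.1, items (i)--(v) and the first-order part of (vi) are quoted from Chen--Lin \cite{CLin1}, and the higher-order expansion in (vi) from \cite{BJLY}. A genuine proof would follow the Chen--Lin machinery: construct the approximate bubble $U_{n,j}$ of \eqref{2.5}, control the error $\eta_{n,j}$ of \eqref{2.7} through the invertibility of the linearized Liouville operator away from its kernel, use local Pohozaev identities on $B_{r_0}(x_{n,j})$ to pin down $x_{n,j}$ and to estimate $\rho_{n,j}-8\pi$, and then expand $\l_n=\sum_j\rho_{n,j}+\l_n\int_{\Omega\setminus\cup_j B_{r_0}(q_j)}he^{\tilde u_n}$ to extract the terms involving $l(\mathbf{q})$ and $D(\mathbf{q})$ in \eqref{2.11}. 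If your intended target was Theorem \ref{th1.2}, your outline is broadly consistent with the paper's Section 3, but as an answer to the stated problem it misses the mark entirely.
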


\noindent {\bf Remark 2.1.} \emph{The conclusions (i)-(v) and the estimate up to the order $\mu_ne^{-\mu_n}$ on $\lambda_n-8\pi m$
have been found in \cite{CLin1}. More recently \cite{BJLY} the authors of this paper derived the higher order estimate in (vi).
We remark that, although we will use (vi), we will not make use of the fact that either $l({\bf q})\neq 0$ or $D({\bf q})\neq 0$, see \eqref{a1.3} below.}

\bigskip
\section{Proof of Theorem \ref{th1.2}.}
To prove Theorem \ref{th1.2} we argue by contradiction and assume that \eqref{1.1e} has two different solutions $w_{n}^{(1)}$ and $w_{n}^{(2)}$ with the same $\v_n$, which blows up at $q_j\not\in\{p_1,\cdots,p_{\ell}\},~j=1,\cdots,m$. We set
\begin{equation}
\label{4.1}
{\l_{n}^{(i)}}={\v_n^2}\int_{\Omega}he^{w_n^{(i)}},\quad i=1,2,
\end{equation}
then $u_{n}^{(i)}:=w_{n}^{(i)},~i=1,2$ are the $m-$bubbling sequence of
\begin{equation}
\label{4.2}
\begin{cases}
\Delta u_{n}^{(i)}+{\l_{n}^{(i)}}\frac{he^{u_n^{(i)}}}{\int_{\Omega}he^{u_n^{(i)}}}=0\quad&\mathrm{in}\quad\Omega,\\
\\
u_{n}^{(i)}=0\quad&\mathrm{on}\quad \partial\Omega,
\end{cases}
\end{equation}
respectively, which blow up at $q_j\not\in\{p_1,\cdots,p_{\ell}\},~j=1,\cdots,m$. {By Remark 1.1,} we have ${\l_{n}^{(i)}} \to 8\pi m.$
We shall use $x_{n,j}^{(i)},\mu_n^{(i)},\mu_{n,j}^{(i)},\tilde u_n^{(i)},U_{n,j}^{(i)},\eta_{n,j}^{(i)}, {x_{n,j,*}^{(i)}},\phi_{n}^{(i)},\rho_{n,j}^{(i)}$
to denote $x_{n,j},\mu_n,\mu_{n,j},\tilde u_n,U_{n,j},\eta_{n,j},x_{n,j,*},\phi_{n},\rho_{n,j}$, as defined in section 2,
corresponding to $u_n^{(i)},~i=1,2$ respectively. In order to show that $u_{n}^{(1)}=u_n^{(2)}$, it is enough to show that,
\begin{equation}
\label{4.3}
\tilde u_{n}^{(1)}+{\log\l_{n}^{(1)}}=\tilde u_{n}^{(2)}+\log{\l_{n}^{(2)}}.
\end{equation}
Indeed, it is not difficult to see from \eqref{4.1} that,
\begin{equation}
\label{4.4}
\tilde u_{n}^{(i)}+\log{\l_{n}^{(i)}}=u_{n}^{(i)}+2\log \v,\quad i=1,2.
\end{equation}

Therefore, whenever we prove \eqref{4.3}, then $u_{n}^{(1)}=u_{n}^{(2)}$ immediately follows from  \eqref{4.4}. It is useful to set,
\begin{equation}
\label{4.5}
\hat u_n^{(i)}=\tilde u_{n}^{(i)}+\log{\l_{n}^{(i)}} ,\quad \hat\mu_{n,j}^{(i)}=\mu_{n,j}^{(i)}+\log{\l_{n}^{(i)}} ,~j=1,\cdots,m,
\end{equation}
so that in particular we find,
\begin{equation}
\label{4.6}
\hat u_n^{(i)}=u_n^{(i)}+2\log\v,\quad i=1,2.
\end{equation}
Next, let us study the term $\hat u_n^{(1)}-\hat u_n^{(2)}$. First of all we obtain an estimate about $\|\hat u_n^{(1)}-\hat u_n^{(2)}\|_{L^\infty(\Omega)}$.
\begin{lemma}
\label{le4.1}
$(i)$ $|\hat\mu_{n,j}^{(1)}-\hat\mu_{n,j}^{(2)}|=O\big(\sum_{i=1}^2\hat\mu_{n,j}^{(i)}e^{-\hat\mu_{n,j}^{(i)}}\big)$
for all $1\le j\le m$.

$(ii)$ $\|\hat{u}_n^{(1)}-\hat{u}_n^{(2)}\|_{L^\infty(\Omega)}=O\big(\sum_{i=1}^2\hat\mu_{n,1}^{(i)}e^{-\frac{\hat\mu_{n,1}^{(i)}}{2}}\big).$
\end{lemma}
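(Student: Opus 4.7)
The plan is to prove (i) first via subtraction of the sharp asymptotic identity of Theorem \ref{th2.b}(ii) applied to both sequences, and then use (i) together with the near- and far-field bubble decompositions of Theorem \ref{th2.a} to deduce the $L^\infty$ bound in (ii).

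For (i), the crucial observation is that $u_n^{(i)}=0$ on $\partial\Omega$ together with $\lambda_n^{(i)}=\varepsilon_n^2\int_\Omega he^{u_n^{(i)}}$ yields $\tilde u_{n,0}^{(i)}+\log\lambda_n^{(i)}=2\log\varepsilon_n$, which is independent of $i$. Adding $\log\lambda_n^{(i)}$ to both sides of the identity in Theorem \ref{th2.b}(ii) and subtracting the $i=2$ equation from the $i=1$ one, this ``boundary'' contribution cancels and
\begin{equation*}
\hat\mu_{n,j}^{(1)}-\hat\mu_{n,j}^{(2)}=-2\log\frac{h(x_{n,j}^{(1)})}{h(x_{n,j}^{(2)})}-\big[G_j^*(x_{n,j}^{(1)})-G_j^*(x_{n,j}^{(2)})\big]-\big[A^{(1)}_{n,j}-A^{(2)}_{n,j}\big]+O\Big(\sum_i \mu_{n,j}^{(i)} e^{-\mu_{n,j}^{(i)}}\Big),
\end{equation*}
where $A^{(i)}_{n,j}=\frac{2\Delta\log h(x_{n,j}^{(i)})}{\lambda_n^{(i)} h(x_{n,j}^{(i)})}(\mu_{n,j}^{(i)})^2 e^{-\mu_{n,j}^{(i)}}$. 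The $h$- and $G_j^*$-differences are $O(|x_{n,j}^{(1)}-x_{n,j}^{(2)}|)=O(\sum_i \mu_{n,j}^{(i)} e^{-\mu_{n,j}^{(i)}})$ by Theorem \ref{th2.b}(iii). The main obstacle is the term $A^{(1)}_{n,j}-A^{(2)}_{n,j}$, which is a priori of order $\mu^2 e^{-\mu}$, one order worse than the target. I would handle it by bootstrapping: the mean value theorem applied to $f(t)=t^2e^{-t}$ (with $|f'(t)|=O(t^2e^{-t})$), together with the smoothness of $\Delta\log h$, $h$ and the estimate $|x_{n,j}^{(i)}-q_j|=O(\mu e^{-\mu})$, gives $|A^{(1)}_{n,j}-A^{(2)}_{n,j}|\le C\mu^2 e^{-\mu}|\mu_{n,j}^{(1)}-\mu_{n,j}^{(2)}|+O(\mu^3 e^{-2\mu})$. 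Since Theorem \ref{th2.b}(vi) furnishes $|\log\lambda_n^{(1)}-\log\lambda_n^{(2)}|=O(\sum_i\mu_{n,1}^{(i)}e^{-\mu_{n,1}^{(i)}})$, one has $|\mu_{n,j}^{(1)}-\mu_{n,j}^{(2)}|\le|\hat\mu_{n,j}^{(1)}-\hat\mu_{n,j}^{(2)}|+O(\mu e^{-\mu})$, so the previous display self-refers to $|\hat\mu_{n,j}^{(1)}-\hat\mu_{n,j}^{(2)}|$ with coefficient $O(\mu^2 e^{-\mu})=o(1)$, which can be absorbed into the left-hand side to yield (i).

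For (ii), I would split $\Omega$ into $\bigcup_j B_{r_0}(q_j)$ and its complement. On the complement, the Green's function representation \eqref{2.8} combined with the same boundary cancellation gives $\hat u_n^{(i)}=\phi_n^{(i)}+\sum_k \rho_{n,k}^{(i)} G(\cdot,x_{n,k}^{(i)})+2\log\varepsilon_n$; subtracting, Theorem \ref{th2.a}(b) handles $\phi_n^{(i)}$ while Theorem \ref{th2.b}(iii), (v) and smoothness of $G$ away from its singularities control the Green's function terms, giving a bound of order $o(e^{-\mu_{n,1}/2})+O(\mu e^{-\mu})$. Inside each ball $B_{r_0}(q_j)$, I would use \eqref{2.7} and \eqref{2.5} to write
\begin{equation*}
\hat u_n^{(i)}(x)=\hat\mu_{n,j}^{(i)}-2\log\big(1+R^{(i)}|x-x_{n,j,*}^{(i)}|^2\big)+\big[G_j^*(x)-G_j^*(x_{n,j}^{(i)})\big]+\eta_{n,j}^{(i)}(x),
\end{equation*}
with $R^{(i)}=\frac{1}{8} h(x_{n,j}^{(i)}) e^{\hat\mu_{n,j}^{(i)}}$. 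Changing variables to $y^{(i)}=\sqrt{R^{(i)}}(x-x_{n,j,*}^{(i)})$ turns the bubble term into $\hat\mu_{n,j}^{(i)}-2\log(1+|y^{(i)}|^2)$. Using (i), \eqref{2.6}, Theorem \ref{th2.b}(iii), and the fact that $\sqrt{R^{(i)}}\sim e^{\hat\mu_{n,j}^{(i)}/2}$ amplifies the $O(\mu e^{-\mu})$ displacement of both radii and centres, one checks that $|y^{(1)}-y^{(2)}|=O(\mu e^{-\mu/2})$ uniformly on $B_{r_0}(q_j)$. The elementary inequality $2|y|/(1+|y|^2)\le 1$ then propagates this to $|\log(1+|y^{(1)}|^2)-\log(1+|y^{(2)}|^2)|\le|y^{(1)}-y^{(2)}|=O(\mu e^{-\mu/2})$. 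Combined with (i), the smoothness of $G_j^*$, and Theorem \ref{th2.a}(a) (which gives $|\eta_{n,j}^{(i)}|=O(\mu^2 e^{-\mu})=o(\mu e^{-\mu/2})$), this yields the desired bound on each ball. Since Theorem \ref{th2.b}(i) implies $|\hat\mu_{n,j}^{(i)}-\hat\mu_{n,1}^{(i)}|=O(1)$, the combined $L^\infty$ estimate can be written as $O(\sum_i \hat\mu_{n,1}^{(i)} e^{-\hat\mu_{n,1}^{(i)}/2})$.
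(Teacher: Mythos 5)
Your proposal is correct and follows essentially the same route as the paper: for $(i)$ you subtract the expansions of Theorem \ref{th2.b}-$(ii)$ for the two sequences, using that $\tilde u_{n,0}^{(i)}+\log\l_n^{(i)}=2\log\v_n$ is the same for $i=1,2$, together with the position estimates \eqref{2.6} and Theorem \ref{th2.b}-$(iii)$; for $(ii)$ you compare the bubble profiles in $B_{r_0}(q_j)$ via \eqref{2.5}, \eqref{2.7} and Theorem \ref{th2.a}-(a), and use the Green representation \eqref{2.8} with Theorem \ref{th2.a}-(b) and Theorem \ref{th2.b}-$(v)$ away from the blow up set, exactly as in \eqref{4.11}--\eqref{4.15}. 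The only deviations are technical refinements of the same argument: your mean-value/absorption treatment of the $(\mu_{n,j}^{(i)})^2e^{-\mu_{n,j}^{(i)}}$ term makes explicit a step the paper leaves implicit in passing from \eqref{4.8} to \eqref{4.10}, and your rescaling plus the Lipschitz bound $2|y|/(1+|y|^2)\le 1$ replaces the direct quadratic estimate \eqref{4.12}--\eqref{4.13}.
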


\begin{proof}
$(i)$ In view of Theorem \ref{th2.b}-$(ii)$ we have,
\begin{equation}
\label{4.7}
\begin{aligned}
&\mu_{n,j}^{(i)}+\tilde{u}_{n,0}^{(i)}+2\log\frac{{\l_{n}^{(i)}} h(x_{n,j})}{8}+G_j^*(x_{n,j})\\
&=-\frac{2}{{\l_{n}^{(i)}} {h(x_{n,j}^{(i)})}}(\Delta \log h(x_{n,j}^{(i)}))(\mu_{n,j}^{(i)})^2e^{-\mu_{n,j}^{(i)}}
+O(\mu_{n,j}^{(i)}e^{-\mu_{n,j}^{(i)}}),
\quad i=1,2,
\end{aligned}
\end{equation}
which implies that,
\begin{equation}
\label{4.8}
\begin{aligned}
&\hat\mu_{n,j}^{(i)}+\log\frac{{\l_{n}^{(i)}} }{\int_{\Omega}he^{u_n^{(i)}}}+2\log\frac{h(x_{n,j})}{8}+G_j^*(x_{n,j})\\
&=-\frac{2}{{\l_{n}^{(i)}} {h(x_{n,j}^{(i)})}}(\Delta \log h(x_{n,j}^{(i)}))(\mu_{n,j}^{(i)})^2e^{-\mu_{n,j}^{(i)}}
+O(\mu_{n,j}^{(i)}e^{-\mu_{n,j}^{(i)}}),
\quad i=1,2,
\end{aligned}
\end{equation}
where we used $\tilde{u}_{n,0}^{(i)}=-\log\int_{\Omega}he^{u_n^{(i)}},~i=1,2.$ By \eqref{2.6} and Theorem \ref{th2.b}-(iii), we have
\begin{equation}
\label{4.9}
|x_{n,j}^{(1)}-x_{n,j}^{(2)}|=O(\sum_{i=1}^2\mu_{n,j}^{(i)}e^{-\mu_{n,j}^{(i)}})\quad\mathrm{and}\quad
|x_{n,j,*}^{(1)}-x_{n,j,*}^{(2)}|=O(\sum_{i=1}^2\mu_{n,j}^{(i)}e^{-\mu_{n,j}^{(i)}}).
\end{equation}
By \eqref{4.8}-\eqref{4.9}, and the fact that $\v_n^2={\frac{{\l_{n}^{(i)}} }{\int_{\Omega}he^{u_n^{(i)}}}},~i=1,2,$ we get
\begin{equation}
\label{4.10}
\hat\mu_{n,j}^{(1)}-\hat\mu_{n,j}^{(2)}=O(\sum_{i=1}^2\mu_{n,j}^{(i)}e^{-\mu_{n,j}^{(i)}}),
\end{equation}
which implies the first claim. We remark that the assumptions about $l(\bf{q})$ and $D(\bf{q})$ are not needed here since
$\v_n^2={\frac{{\l_{n}^{(i)}} }{\int_{\Omega}he^{u_n^{(i)}}}},~i=1,2$.
\medskip

$(ii)$ {Let us fix a small constant $r_0>0$.} In view of \eqref{2.7} and Theorem \ref{th2.a}-(a), we see that for $x\in B_{r_0}(q_j)$, it holds,
\begin{equation}
\label{4.11}
\begin{aligned}
\hat u_n^{(1)}-\hat u_n^{(2)}
=~&U_{n,j}^{(1)}-U_{n,j}^{(2)}+G_j^*(x_{n,j}^{(2)})-G_j^*(x_{n,j}^{(1)})+\log\frac{{\l_{n}^{(1)}}}{{\l_{n}^{(2)}}}+\eta_{n,j}^{(1)}-\eta_{n,j}^{(2)}\\
=~&U_{n,j}^{(1)}-U_{n,j}^{(2)}+G_j^*(x_{n,j}^{(2)})-G_j^*(x_{n,j}^{(1)})+\log\frac{{\l_{n}^{(1)}}}{{\l_{n}^{(2)}}}
+O(\sum_{i=1}^2(\mu_{n,j}^{(i)})^2e^{-\mu_{n,j}^{(i)}}).
\end{aligned}
\end{equation}
By the definition of $U_{n,j}^{(i)}$, we find that,
\begin{equation}
\label{4.12}
U_{n,j}^{(1)}-U_{n,j}^{(2)}+\log\frac{{\l_{n}^{(1)}}}{{\l_{n}^{(2)}}}=
2\log\dfrac{(1+\frac{h(x_{n,j}^{(2)})}{8}e^{\hat\mu_{n,j}^{(2)}}|x-x_{n,j,*}^{(2)}|^2)}
{(1+\frac{h(x_{n,j}^{(1)})}{8}e^{\hat\mu_{n,j}^{(1)}}|x-x_{n,j,*}^{(1)}|^2)}
+\hat\mu_{n,j}^{(1)}-\hat\mu_{n,j}^{(2)}.
\end{equation}
In view of \eqref{4.9} and $(i)$ above, we also see that,
\begin{equation}
\label{4.13}
\begin{aligned}
&h(x_{n,j}^{(2)})e^{\hat\mu_{n,j}^{(2)}}|x-x_{n,j,*}^{(2)}|^2-h(x_{n,j}^{(1)})e^{\hat\mu_{n,j}^{(1)}}|x-x_{n,j,*}^{(1)}|^2\\
&=O(e^{\hat\mu_{n,j}^{(1)}})\left(|x-x_{n,j,*}^{(1)}||x_{n,j,*}^{(1)}-x_{n,j,*}^{(2)}|+|x_{n,j,*}^{(1)}-x_{n,j,*}^{(2)}|^2\right)\\
&\quad+O(e^{\hat\mu_{n,j}^{(1)}})\left(|x-x_{n,j,*}^{(1)}|^2(|\hat\mu_{n,j}^{(1)}-\hat\mu_{n,j}^{(2)}|
+|x_{n,j}^{(1)}-x_{n,j}^{(2)}|)\right),
\end{aligned}
\end{equation}
which together with \eqref{4.10}-\eqref{4.12}, implies that,
\begin{equation}
\label{4.14}
\|\hat u_n^{(1)}-\hat u_n^{(2)}\|_{L^{\infty}({B_{r_0}(q_j)})}=O(\sum_{i=1}^2\hat\mu_{n,1}^{(i)}e^{-\frac{\hat\mu_{n,1}^{(i)}}{2}}).
\end{equation}

Next, we estimate $\hat u_n^{(1)}-\hat u_n^{(2)}$ in $\Omega\setminus\bigcup_{j=1}^mB_{r_0}(q_j)$. By \eqref{2.8} and Theorem \ref{th2.a}-(a), we have,
\begin{equation}
\label{4.15}
\begin{aligned}
\hat u_n^{(1)}-\hat u_n^{(2)}=~&\sum_{j=1}^m\left(\rho_{n,j}^{(1)}G(x,x_{n,j}^{(1)})-{\rho_{n,j}^{(2)}G(x,x_{n,j}^{(2)})}\right)
+\phi_{n}^{(1)}-\phi_{n}^{(2)}=o(\sum_{i=1}^2e^{-\frac{\hat\mu_{n,j}^{(i)}}{2}})
\end{aligned}
\end{equation}
for $x\in\Omega\setminus\bigcup_{j=1}^mB_{r_0}(q_j)$. Clearly \eqref{4.14} and \eqref{4.15} prove $(ii)$, and so the proof of Lemma \ref{le4.1} is completed.
\end{proof}

\medskip

Let us define,
\begin{equation}
\label{4.16}
\xi_n(x)=\frac{\hat u_n^{(1)}(x)-\hat u_n^{(2)}(x)}{\|\hat u_n^{(1)}-\hat u_n^{(2)}\|_{L^{\infty}(\Omega)}},\quad x\in\Omega,
\end{equation}
\begin{equation}
\label{4.17}
g_n^*(x)=\frac{h(x)}{\|\hat u_n^{(1)}-\hat u_n^{(2)}\|_{L^{\infty}(\Omega)}}\left(e^{\hat u_n^{(1)}(x)}-e^{\hat u_n^{(2)}(x)}\right),\quad x\in\Omega,
\end{equation}
and
\begin{equation}
\label{4.18}
c_n(x)=\frac{e^{\hat u_n^{(1)}}-e^{\hat u_n^{(2)}}}{{\hat u_n^{(1)}-\hat u_n^{(2)}}}=e^{\hat{u}_n^{(1)}}
\left(1+O(\|\hat u_n^{(1)}-\hat u_n^{(2)}\|_{L^{\infty}(\Omega)})\right),\quad x\in \Omega.
\end{equation}
It is not difficult to see that
\begin{equation}
\label{4.19}
\begin{cases}
\Delta\xi_n+g_n^*=\Delta\xi_n+hc_n\xi_n=0\quad &\mathrm{in}\quad \Omega,\\
\\
\xi_n=0\quad &\mathrm{on}\quad \partial\Omega.
\end{cases}
\end{equation}
Once more, it is worth to point out that $\xi_n=0$ on $\partial \Omega$ since $\v_n^2={\frac{{\l_{n}^{(i)}} }{\int_{\Omega}he^{u_n^{(i)}}}},~i=1,2.$\\

To study the behavior of $\xi_n$ in {$B_{\delta}(x_{n,j}^{(1)})$}, we set
\begin{equation}
\label{4.20}
\xi_{n,j}(z)=\xi_n(e^{-\frac{\hat\mu_{n,j}^{(1)}-\log8}{2}}z+x_{n,j}^{(1)}),\quad
|z|\leq \delta e^{\frac{\hat\mu_{n,j}^{(1)}-\log 8}{2}}\quad
\mathrm{for}\quad j=1,\cdots,m.
\end{equation}
\medskip

Our first estimate is about the limit of $\xi_{n,j}$.

\begin{lemma}
\label{le4.2}
There exists $b_{j,0},b_{j,1}$ and $b_{j,2}$ such that
\begin{equation}
\label{4.21}
\xi_{n,j}(z)\to \sum_{i=0}^3b_{j,i}\psi_{j,0}(z)\quad\mbox{in}\quad \m^0(\B^2),
\end{equation}
where
\begin{equation}
\label{4.22}
\psi_{j,0}(z)=\frac{1-h(q_j)|z|^2}{1+h(q_j)|z|^2},\quad \psi_{j,1}(z)=\frac{\sqrt{h(q_j)}z_1}{1+h(q_j)|z|^2},\quad
\psi_{j,2}(z)=\frac{\sqrt{h(q_j)}z_2}{1+h(q_j)|z|^2}.
\end{equation}
\end{lemma}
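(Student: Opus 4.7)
My plan is to pass to the limit in the rescaled equation satisfied by $\xi_{n,j}$ and identify the limit as a bounded entire solution of the linearized Liouville equation, whose kernel is spanned by the three functions $\psi_{j,0}, \psi_{j,1}, \psi_{j,2}$. First I would rewrite \eqref{4.19} in the variable $z$ defined by $x = \sqrt{8}\,e^{-\hat\mu_{n,j}^{(1)}/2}\,z + x_{n,j}^{(1)}$ from \eqref{4.20}; a direct change of variables yields
\begin{equation*}
\Delta_z \xi_{n,j}(z) + 8\,e^{-\hat\mu_{n,j}^{(1)}}\, h(x(z))\, c_n(x(z))\, \xi_{n,j}(z) = 0
\end{equation*}
on the growing disc $|z| \le \delta\, e^{(\hat\mu_{n,j}^{(1)} - \log 8)/2}$, with $\|\xi_{n,j}\|_{L^\infty} \le 1$. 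By \eqref{4.18} and Lemma \ref{le4.1}(ii) one has $c_n = e^{\hat u_n^{(1)}}(1 + o(1))$ uniformly in $\Omega$, so the point is to understand the asymptotic profile of $e^{\hat u_n^{(1)}}$ in the scaled variable.

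Next I would combine the definition \eqref{4.5} of $\hat u_n^{(1)}$, the decomposition \eqref{2.7} of $\tilde u_n^{(1)}$, the explicit form \eqref{2.5} of $U_{n,j}^{(1)}$, the shift estimate \eqref{2.6} on $x_{n,j,*}^{(1)}$, and the pointwise bound in Theorem \ref{th2.a}(a), to derive, on any compact set of $\mathbb{R}^2$,
\begin{equation*}
e^{\hat u_n^{(1)}(x(z))} = \frac{e^{\hat\mu_{n,j}^{(1)}}}{\bigl(1 + h(q_j)|z|^2\bigr)^2}\bigl(1 + o(1)\bigr).
\end{equation*}
The key algebraic cancellation is $\tfrac{\lambda_n^{(1)} h(x_{n,j}^{(1)})}{8}\, e^{\mu_{n,j}^{(1)}}\, |x - x_{n,j,*}^{(1)}|^2 = h(x_{n,j}^{(1)})|z|^2 + o(1)$ uniformly for bounded $z$, while the Green's function piece $G_j^*(x) - G_j^*(x_{n,j}^{(1)}) = O(e^{-\hat\mu_{n,j}^{(1)}/2})$ and $\eta_{n,j}^{(1)} = O(\mu_{n,j}^2 e^{-\mu_{n,j}})$ are negligible. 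Consequently the coefficient in the rescaled equation converges, locally uniformly, to $\frac{8\, h(q_j)}{(1 + h(q_j)|z|^2)^2}$.

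At this stage, standard interior $W^{2,p}$ and Schauder estimates applied to the rescaled equation, combined with $\|\xi_{n,j}\|_{L^\infty} \le 1$, produce a subsequence converging in $\m^0(\mathbb{R}^2)$ to a bounded function $\xi_\infty$ solving
\begin{equation*}
\Delta_z \xi_\infty + \frac{8\, h(q_j)}{(1 + h(q_j)|z|^2)^2}\, \xi_\infty = 0 \quad \text{in } \mathbb{R}^2.
\end{equation*}
Under the linear change $w = \sqrt{h(q_j)}\, z$ this becomes $L\xi_\infty = 0$ with $L$ as in \eqref{1.11}, so by the classification of the bounded kernel of $L$ recalled right after \eqref{1.11}, $\xi_\infty$ must be a linear combination of $\psi_{j,0}, \psi_{j,1}, \psi_{j,2}$. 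Since every subsequential limit has this form and is uniquely determined by its values at three suitable points (e.g.\ the origin and two points on the axes), the full sequence converges, yielding \eqref{4.21}.

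The main technical point I expect to be delicate is the uniform justification of the expansion of $e^{\hat u_n^{(1)}(x(z))}$ on compact sets of the scaled variable, namely the bookkeeping of the small terms $O(e^{-\mu_{n,j}^{(1)}})$, $O(\mu_{n,j}^2\, e^{-\mu_{n,j}^{(1)}})$, and the Lipschitz remainders coming from $G_j^*$, so that they are genuinely $o(1)$ uniformly for bounded $z$. This fits exactly within the scope of the refined estimates of Theorems \ref{th2.a}--\ref{th2.b}, and, importantly, requires no hypothesis on $l(\mathbf{q})$ or $D(\mathbf{q})$, in accordance with the philosophy behind Theorem \ref{th1.2}.
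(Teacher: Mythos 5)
Your proposal is correct and follows essentially the same route as the paper: rescale \eqref{4.19}, show the coefficient converges locally uniformly to $\tfrac{8h(q_j)}{(1+h(q_j)|z|^2)^2}$ using \eqref{2.5}--\eqref{2.7}, \eqref{2.6} and Theorem \ref{th2.a}(a), pass to the limit via $\|\xi_{n,j}\|_{L^\infty}\le 1$ and elliptic estimates, and invoke the classification of the bounded kernel of the linearized Liouville operator (the paper cites \cite[Proposition 1]{bp}). The only shaky step is your claim of full-sequence convergence from ``determined by values at three points'' (different subsequences could a priori give different values there), but this is immaterial since, as in the paper, convergence along a subsequence suffices for the contradiction argument.
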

\begin{proof}
  From Lemma \ref{le4.1}, \eqref{2.6} and (a) in Theorem \ref{th2.a}, we have,
  {$$\Delta \xi_{n,j}+ 8h(x_{n,j}^{(1)})e^{-\hat\mu_{n,j}^{(1)}}e^{\hat{u}_n^{(1)}
  (e^{-\frac{\hat\mu_{n,j}^{(1)}-\ln8}{2}}z+x_{n,j}^{(1)})}\xi_{n,j}(1+o(1))=0,$$}
  {and}
\begin{equation}
\label{3.6}
{8h(x_{n,j}^{(1)})e^{-\hat\mu_{n,j}^{(1)}}e^{\hat{u}_n^{(1)}(e^{-\frac{\hat\mu_{n,j}^{(1)}-\ln8}{2}}z+x_{n,j}^{(1)})}\to \frac{8h(q_j)}{(1+ h(q_j)|z|^2)^2}\quad \mathrm{in}\quad \mathrm{C}_{\mathrm{loc}}^0(\B^2).
}\end{equation}
Therefore, since $|\xi_{n,j}|\leq1$ and because of the equation \eqref{4.19}, then we conclude that $\xi_{n,j}\to\xi_j$ in $\mathrm{C}_{\mathrm{loc}}^0(\B^2)$, where $\xi_j$ satisfies,
\begin{equation}
\label{3.7}
\Delta\xi_j+\frac{{8h(q_j)}}{(1+{h(q_j)}|z|^2)^2}\xi_j=0\quad\mathrm{in}\quad \B^2.
\end{equation}
By \cite[Proposition 1]{bp}, {there are constants $b_{j,i}$ such that}
\begin{equation*}
\xi_j=b_{j,0}\psi_{j,0}+b_{j,1}\psi_{j,1}+b_{j,2}\psi_{j,2},
\end{equation*}
which proves the lemma.
\end{proof}

\medskip

As discussed in the introduction,
we shall see in the next lemma that all the Fourier coefficients $b_{j,0}$ take the same value and that $\xi_n(x)$ will converge to such constant
(denoted by $b_0$) far away from the blow up points. Furthermore we will prove that $b_0=b_{j,1}=b_{j,2}=0$ for $j=1,\cdots,m.$

 {For any $r>0$, we denote by} \[{\Lambda_{n,j,r}^-=re^{-(\hat\mu_{n,j}^{(1)}-\log8)/2}, \ \textrm{ and}\  \Lambda_{n,j,r}^+=re^{(\hat\mu_{n,j}^{(1)}-\log8)/2},~j=1,\cdots,m.}\]
\begin{lemma}
\label{le3.2}
Let $\xi_n$ be defined in \eqref{4.16}, then we have,
\begin{itemize}
  \item [$(i)$] $b_{j,0}=b_0=0$ for $j=1,\cdots,m$ and
  $$\xi_n(x)=o_R(1)+o_n(1)\quad\mbox{in}\quad \m ^0(\om\setminus\bigcup_{j=1}^m B_{\Lambda_{n,j,R}^-}({x_{n,j}^{(1)}})~\mbox{as}~n\to+\infty,$$
  where $o_R(1)\to0$ as $R\to+\infty$ and $n\to0$ as $n\to+\infty.$
	
	\medskip
	
  \item [$(ii)$] $b_{j,1}=b_{j,2}=0$ for $j=1,\cdots,m.$
\end{itemize}
\end{lemma}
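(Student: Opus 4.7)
My plan splits along parts (i) and (ii).

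For part (i), by Theorem~\ref{th2.b}(iv) the coefficient $hc_n$ in \eqref{4.19} is $O(e^{-\mu_n})$ on any compact $K\subset\overline{\Omega}\setminus\{q_1,\dots,q_m\}$, so with $|\xi_n|\leq1$ and standard elliptic estimates, a subsequence converges in $C^1_{\mathrm{loc}}(\overline{\Omega}\setminus\{q_1,\dots,q_m\})$ to $\xi_\infty$, which is bounded and harmonic in $\Omega\setminus\{q_1,\dots,q_m\}$ with $\xi_\infty|_{\partial\Omega}=0$. The removable singularity theorem extends $\xi_\infty$ harmonically across each $q_j$, and the maximum principle then forces $\xi_\infty\equiv 0$. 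To identify $b_{j,0}$, I would match inner and outer: since $\psi_{j,0}(R\omega)\to -1$ and $\psi_{j,k}(R\omega)\to 0$ ($k=1,2$) as $R\to\infty$, Lemma~\ref{le4.2} gives $\xi_n=-b_{j,0}+O(R^{-1})+o_n(1)$ on $\partial B_{\Lambda_{n,j,R}^-}(x_{n,j}^{(1)})$. A harmonic-annulus (logarithmic interpolation) analysis on $B_{r_0/2}(q_j)\setminus\overline{B_{\Lambda_{n,j,R}^-}(x_{n,j}^{(1)})}$, using that $hc_n=o(1)$ there and that the log-width of the annulus is $\hat\mu_{n,j}^{(1)}/2\to\infty$, then forces the inner and outer boundary values to match, giving $-b_{j,0}=\xi_\infty(q_j)=0$. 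The uniform convergence claim on $\Omega\setminus\bigcup_j B_{\Lambda_{n,j,R}^-}(x_{n,j}^{(1)})$ is a by-product.

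For part (ii), I plan to combine a localized Pohozaev identity with the non-degeneracy of $D_\Omega^2 f_m$. Multiplying $\Delta\hat u_n^{(i)}+he^{\hat u_n^{(i)}}=0$ by $\partial_k\hat u_n^{(i)}$ and integrating by parts on $B_{r_0}(q_j)$ yields, for $k=1,2$,
\begin{equation*}
\int_{B_{r_0}(q_j)}\partial_k h\cdot e^{\hat u_n^{(i)}}\,dx=\oint_{\partial B_{r_0}(q_j)}\Bigl[he^{\hat u_n^{(i)}}\nu_k+\partial_k\hat u_n^{(i)}\,\partial_\nu\hat u_n^{(i)}-\tfrac{1}{2}\nu_k|\nabla\hat u_n^{(i)}|^2\Bigr]dS.
\end{equation*}
Subtracting the two identities and dividing by $\|\hat u_n^{(1)}-\hat u_n^{(2)}\|_{L^\infty(\Omega)}$: on the boundary, part~(i) gives $\xi_n,\nabla\xi_n=o(1)$ on $\partial B_{r_0}(q_j)$, so the normalized boundary difference reduces to a linear expression in the rescaled displacements $X_{n,j,k}:=e^{\hat\mu_{n,j}^{(1)}/2}(x_{n,j}^{(1)}-x_{n,j}^{(2)})_k/\|\hat u_n^{(1)}-\hat u_n^{(2)}\|_{L^\infty(\Omega)}$ governed by $D_\Omega^2 f_m(\mathbf{q})$ (obtained by Taylor-expanding the critical-point condition $\nabla(\log h+G_j^*)(x_{n,j}^{(i)})=O(\mu e^{-\mu})$ from Theorem~\ref{th2.b}(iii) around $\mathbf{q}$). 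Comparing the bubble profiles $U_{n,j}^{(1)}$ and $U_{n,j}^{(2)}$ and using Lemma~\ref{le4.2} provides the identification $X_{n,j,k}\to -b_{j,k}/(4\sqrt{h(q_j)})$ for $k=1,2$; meanwhile the normalized interior integral, after rescaling and using $b_{j,0}=0$ from part~(i) to kill the even-parity contribution of $\psi_{j,0}$, converts to another linear expression in the same $b_{j,k}$. Collecting the $2m$ equations ($j=1,\dots,m$, $k=1,2$) produces a homogeneous system of the form $D_\Omega^2 f_m(\mathbf{q})\,\mathbf{b}=0$ on $\mathbf{b}=(b_{j,k})$, which by non-degeneracy forces $\mathbf{b}=0$.

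The main obstacle is the bookkeeping in the Pohozaev expansion: at leading order both sides only reproduce $\nabla f_m(\mathbf{q})=0$ (trivial upon differencing), so one must push the expansion one order further — precisely where the refined estimate of Theorem~\ref{th2.b}(vi) enters. Crucially, because $\v_n$ is held fixed (and not $\lambda_n^{(i)}$), the nonlocal constants $\ell(\mathbf{q})$ and $D(\mathbf{q})$ appearing in (vi) cancel on subtraction, which is why they can be avoided in the hypotheses of Theorem~\ref{th1.2}.
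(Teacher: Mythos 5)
Your overall skeleton matches the paper's (outer harmonic limit plus matching across the neck for (i); a Pohozaev identity combined with non-degeneracy of $D^2_\Omega f_m$ for (ii)), but in both parts the step you wave through is exactly the step that carries the difficulty. In (i), the claim that a ``harmonic-annulus (logarithmic interpolation) analysis'', using only $hc_n=o(1)$ in the neck and the divergence of its logarithmic width, ``forces the inner and outer boundary values to match'' is false as stated: a bounded solution of $\Delta\xi_n+hc_n\xi_n=0$ in the annulus $B_\delta\setminus B_{\Lambda^-_{n,j,R}}$ can still carry a logarithmic mode $a+b\log r$ with $b=O(1/\hat\mu_{n,j}^{(1)})$, producing an $O(1)$ swing of the spherical average across the neck; moreover the naive flux bound $\int_{\partial B_r}\partial_\nu\xi_n=-\int_{B_r}hc_n\xi_n=o_n(1)+O(R^{-2})$ is useless once multiplied by the neck width $\sim\hat\mu_{n,j}^{(1)}/2$. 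To kill this mode one needs a flux-type estimate of size $o\big(1/\hat\mu_{n,j}^{(1)}\big)$, which the paper obtains by pairing $\xi_n$ with the approximate radial kernel element $\psi_{n,j}$ in a Green identity (so that the leading contributions cancel against $h(x_{n,j}^{(1)})e^{U_{n,j}^{(1)}}$, using Theorem~\ref{th2.a}-(a), \eqref{2.6} and Lemma~\ref{le4.1}-(ii)), and then integrating the resulting ODE for the spherical average $\xi_{n,j}^*$. Without an ingredient of this type, $b_{j,0}=0$ does not follow from your argument.

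In (ii) the issue is one of precision. Both sides of the relevant Pohozaev identity (whether you difference two identities or use the differenced identity \eqref{3.27} directly, which is essentially the same) are of size $e^{-\hat\mu_{n,j}^{(1)}/2}$, so all boundary terms on $\partial B_{r}(x_{n,j}^{(1)})$ must be evaluated with error $o\big(e^{-\hat\mu_{n,j}^{(1)}/2}\big)$; the information ``$\xi_n,\nabla\xi_n=o(1)$ on $\partial B_{r_0}(q_j)$'' from part (i) is therefore far too weak, since the terms you need to capture (the Robin-function and inter-bubble Green-function second derivatives that assemble $D^2_\Omega f_m$) come precisely from the $O(e^{-\hat\mu^{(1)}_{n,j}/2})$ part of $\xi_n$ on that circle. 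The paper supplies this via the refined outer expansion $\xi_n=\sum_{j,h}b_{n,j,h}\partial_{y_h}G(y,\cdot)|_{y=x^{(1)}_{n,j}}+o(e^{-\hat\mu^{(1)}_{n,1}/2})$ in $C^1$ away from the blow-up set (Lemma~\ref{lea.1} and Remark 4.1, the latter requiring the nontrivial fact $A^{(1)}_{n,j}=o(e^{-\hat\mu^{(1)}_{n,j}/2})$), and your proposal contains no substitute for it; likewise your reduction of the boundary difference to a linear expression in the rescaled displacements $X_{n,j,k}$, and the identification $X_{n,j,k}\to -b_{j,k}/(4\sqrt{h(q_j)})$, are asserted rather than proved (a priori $X_{n,j,k}$ is not even known to be bounded without such refined estimates). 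Your closing observation that neither $\ell(\mathbf{q})\neq0$ nor $D(\mathbf{q})\neq0$ is needed because $\varepsilon_n$ is fixed is in the right spirit: in the paper Theorem~\ref{th2.b}-(vi) enters only through the size estimate $\l_n^{(1)}-8\pi m=O(\hat\mu^{(1)}_{n,1}e^{-\hat\mu^{(1)}_{n,1}})$ used in \eqref{a1.3}, not through the sign of $\ell(\mathbf{q})$ or $D(\mathbf{q})$.
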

\medskip

Taking Lemma \ref{le3.2} for granted, then we can conclude the proof of Theorem \ref{th1.2}.
\bigskip

\noindent {\em Proof of Theorem \ref{th1.2}.} Let $x_n^*$ be a maximum point of $\xi_n$, then we have
\begin{equation}
\label{3.8}
|\xi_n(x_n^*)|=1.
\end{equation}
In view {of Lemma \ref{le3.2}}, we find that $\lim\limits_{n\to+\infty}x_n^*=q_j$ for some $j$ and
\begin{equation}
\label{3.9}
{\lim\limits_{n\to+\infty}e^{\frac{\hat\mu_{n,j}^{(1)}-\ln8}{2}}s_n=+\infty,\quad\mathrm{where}~s_n=|x_n^*-x_{n,j}^{(1)}|.}
\end{equation}
On the other hand, let
$$\tilde\xi_n(x)=\xi_n(s_nx+{x_{n,j}^{(1)}}).$$
From Theorem \ref{th2.a} and equation \eqref{4.19}, we can see that $\tilde\xi_n$ satisfies,
\begin{equation*}{
0=\Delta\tilde\xi_n+ s_n^2hc_n\tilde\xi_n=\Delta\tilde\xi_n
+\dfrac{ s_n^2h(x_{n,j}^{(1)})e^{\hat\mu_{n,j}^{(1)}}\tilde\xi_n\left(1+o(1)+O(s_n|x|)\right)}
{(1+\frac{ h(x_{n,j}^{(1)})}{8}e^{\hat\mu_{n,j}^{(1)}}|s_nx+x_{n,j}^{(1)}-x_{n,j,*}^{(1)}|^2)^2}.}
\end{equation*}
By using \eqref{3.8} we find that,
\begin{equation}
\label{3.10}
\left|\tilde\xi_n((x_n^*-{x_{n,j}^{(1)})}/s_n)\right|=|\xi_n(x_n^*)|=1.
\end{equation}
Next, in view of \eqref{3.9} and the fact that $|\tilde\xi_n|\leq1$, we see that $\tilde\xi_n\to\tilde\xi_0$ on any compact subset of $\B^2\setminus\{0\}$, where $\tilde\xi_n$ satisfies
\begin{equation}
\label{3.11}
\Delta\tilde\xi_n=0\quad\mathrm{in}\quad\B^2\setminus\{0\}.
\end{equation}
Since $|\tilde\xi_n|\leq 1$, then $x=0$ is a removable singularity. As a consequence,
$\tilde\xi_0$ is a bounded harmonic function in $\B^2$ and then from \eqref{3.10} we conclude that either
$\tilde\xi_0=1$ or $\tilde\xi_0=-1$. By using $e^{-{\frac{\hat\mu_{n,j}^{(1)}}{2}}}\ll s_n$ and $\lim\limits_{n\to\infty}s_n=0$,
we also find that,
\begin{equation}
\label{3.12}
|\xi_n(x)|\geq\frac12\quad\mathrm{for}\quad s_n\leq|x-{x_{n,j}^{(1)}}|\leq 2s_n,
\end{equation}
which is the desired contradiction to Lemma \ref{le3.2}. This fact concludes the proof of Theorem \ref{th1.2}.\hfill $\square$

\medskip

Next we prove Lemma \ref{le3.2} and divide the discussion into two parts:
\begin{enumerate}
  \item We first prove that $b_{j,0}=b_0=0$ for $j=1,\cdots,m$ by Green's identity and O.D.E. type arguments.
  \item Then we use {a suitably defined Pohozaev identity} to show that $b_{j,1}=b_{j,2}=0$ for $j=1,\cdots,m$.
\end{enumerate}
\medskip

\noindent {\em Proof of Lemma \ref{le3.2}-(i).} We recall that $\xi_n$ satisfies,
\begin{equation}
\label{3.13}
\Delta\xi_n+{he^{\hat u_n^{(1)}}\xi_n(1+o(1))}=0\quad\mathrm{in}\quad\Omega.
\end{equation}
It is easy to see that ${e^{\hat u_n^{(1)}}\to0}$ in $\m^0(\om\setminus\{q_1,\cdots,q_m\})$ {by \eqref{2.8} and Theorem 2B-$(ii)$.}
Since $\|\xi_n\|_{L^{\infty}(\Omega)}\leq1$, we see that $\xi_n\to\xi_0$ in $\m^0(\om\setminus\{q_1,\cdots,q_m\})$, where the points $\{q_1,\cdots,q_m\}$ are removable singular points of $\xi_0$ and
\begin{equation}
\label{3.14}
\Delta\xi_0=0\quad\mathrm{in}\quad\Omega.
\end{equation}
Therefore, in view of the Dirichlet boundary conditions, we find,
\begin{equation}
\label{3.15}
\xi_0=0\quad\mathrm{in}\quad \Omega,
\end{equation}
which implies that,
\begin{equation}
\label{3.16}
\xi_n\to {b_0=} 0\quad\mathrm{in}\quad \m^0(\om\setminus\{q_1,\cdots,q_m\}).
\end{equation}
Let us fix a small constant $\delta>0$ and set,
$\psi_{n,j}({x})=\frac{1-\frac{h({x}_{n,j}^{(1)})}{8}|{x}-{x}_{n,j}^{(1)}|^2e^{\hat\mu_{n,j}^{(1)}}}
{1+\frac{h({x}_{n,j}^{(1)})}{8}|{x}-{x}_{n,j}^{(1)}|^2e^{\hat\mu_{n,j}^{(1)}}}$.  For $d\in (0,\delta)$ and
in view of {\eqref{2.6}}, we find that,
\begin{equation*}\label{withpsi}\begin{aligned}
&\int_{\partial B_d({x}_{n,j}^{(1)})}\left(\psi_{n,j}\frac{\partial\xi_n}{\partial\nu}-\xi_n\frac{\partial \psi_{n,j}}{\partial \nu}\right)\mathrm{d}\sigma=\int_{B_d({x}_{n,j}^{(1)})}\left(\psi_{n,j}\Delta\xi_n-\xi_n\Delta \psi_{n,j}\right)\mathrm{d} {x}
\\&=\int_{B_d({x}_{n,j}^{(1)})}
\Bigg\{- \xi_n\psi_{n,j}{h}\left(\frac{e^{\hat{u}_n^{(1)}}-e^{\hat{u}_n^{(2)}}}{\hat{u}_n^{(1)}-\hat{u}_n^{(2)}}
\right)+\xi_n\psi_{n,j}{h}({x}_{n,j}^{(1)})e^{U_{n,j}^{(1)}}\Big(\frac{1+\frac{h({x}_{n,j}^{(1)})}{8}|{x}-{x}_{n,j,*}^{(1)}|^2
e^{\hat\mu_{n,j}^{(1)}}}{1+\frac{h({x}_{n,j}^{(1)})}{8}|{x}-{x}_{n,j}^{(1)}|^2e^{\hat\mu_{n,j}^{(1)}}}\Big)^{2}\Bigg\}
\mathrm{d} {x}\\&
=\int_{B_d({x}_{n,j}^{(1)})}\rho_n\xi_n\psi_{n,j}\Bigg\{-{h}e^{\tilde{u}_n^{(1)}}(1+O(|\hat{u}_n^{(1)}-\hat{u}_n^{(2)}|))
+h({x}_{n,j}^{(1)})e^{U_{n,j}^{(1)}}(1+O({ e^{-\frac{\hat\mu_{n,j}^{(1)}}{2}}}))\Bigg\}\mathrm{d} {x}\\
&=\int_{B_d({x}_{n,j}^{(1)})}\rho_n\xi_n\psi_{n,j}\Bigg\{-{h}e^{U_{n,j}^{(1)}+\eta_{n,j}^{(1)}+G_j^*({x})-G_j^*({x}_{n,j}^{(1)})}
(1+O(|\hat{u}_n^{(1)}-\hat{u}_n^{(2)}|))
\\&\quad+h({x}_{n,j}^{(1)})e^{U_{n,j}^{(1)}}(1+O({e^{-\frac{\hat\mu_{n,j}^{(1)}}{2}}}))\Bigg\}\mathrm{d} {x}.
\end{aligned}
\end{equation*}
Let $\overline{f}(z)=f(e^{-\frac{\mu_{n,j}^{(1)}}{2}}z+{x}_{n,j}^{(1)})$.
By a suitable scaling and by using Theorem 2A, we see that,
\begin{equation*}\label{withpsi1}
\begin{aligned}
&\int_{\partial B_d({x}_{n,j}^{(1)})}\left(\psi_{n,j}\frac{\partial\xi_n}{\partial\nu}-\xi_n\frac{\partial \psi_{n,j}}{\partial \nu}\right)\mathrm{d}\sigma
\\& =\int_{B_{\Lambda_{n,j,d}^{+}}(0)}\rho_n\overline{\xi_n}(z)\,\overline{\psi_{n,j}}(z)
\frac{ O(1)(e^{-\frac{\hat\mu_{n,j}^{(1)}}{2}}|z|+|\overline{\hat{u}_n^{(1)}}-\overline{\hat{u}_n^{(2)}}|+
{e^{-\frac{\hat\mu_{n,j}^{(1)}}{2}}}) }{(1+\frac{\rho_nh({x}_{n,j}^{(1)})}{8} |z+
e^{\frac{\mu_{n,j}^{(1)}}{2}}({x}_{n,j}^{(1)}-{x}_{n,j,*}^{(1)})|^2)^2}\mathrm{d} z.
\end{aligned}
\end{equation*}
In view of Lemma \ref{le4.1}-$(ii)$, we obtain
\begin{equation}
\label{withpsi2}
\begin{aligned}
&\int_{\partial B_d({x}_{n,j}^{(1)})}\left(\psi_{n,j}\frac{\partial\xi_n}{\partial\nu}-\xi_n\frac{\partial \psi_{n,j}}{\partial \nu}\right)
\mathrm{d}\sigma=o\left(\frac{1}{\hat\mu_{n,j}^{(1)}}\right).
\end{aligned}
\end{equation}
Let $\xi_{n,j}^*(r)=\int_0^{2\pi}\xi_n(r,\theta)\mathrm{d}\theta$, where $r=|x-x_{n,j}^{(1)}|$. Then \eqref{withpsi2} yields,
\begin{equation*}
\label{radial_eq}
(\xi_{n,j}^*)'(r)\psi_{n,j}(r)-\xi_{n,j}^*(r)\psi_{n,j}'(r)=\frac{o\left(\frac{1}{\hat\mu_{n,j}^{(1)}}\right)}{r},\,\forall r\in (\Lambda_{n,j,R}^{-}, {\delta}].
\end{equation*}
For any $R>0$ large enough and for any $r\in (\Lambda_{n,j,R}^{-}, {\delta})$, we also obtain that,
\[\psi_{n,j}(r)=-1+O\left(\frac{e^{-\hat\mu_{n,j}^{(1)}}}{r^2}\right),\ \ \psi_{n,j}'(r)=
O\left(\frac{{e^{-\hat\mu_{n,j}^{(1)}}}}{r^3}\right).\]
and so we conclude that,
\begin{equation}\label{derivative_xi}
  (\xi_{n,j}^*)'(r)=\frac{o\Big(\frac{1}{ \hat\mu_{n,j}^{(1)}}\Big)}{r}+O\left(\frac{{e^{-\hat\mu_{n,j}^{(1)}}}}{r^3}\right)
  \ \ \textrm{for all } \ \ r\in (\Lambda_{n,j,R}^{-}, {\delta}).
\end{equation}
Integrating \eqref{derivative_xi}, we obtain,
\begin{equation}
\label{compare_xi}
\begin{aligned}
\xi_{n,j}^*(r) =\xi_{n,j}^*(\Lambda_{n,j,R}^{-})+o(1)+o\Big(\frac{1}{ \hat\mu_{n,j}^{(1)}}\Big)R+O({R^{-2}})  \ \ \textrm{for all } \ \ r\in (\Lambda_{n,j,R}^{-}, {\delta}).
\end{aligned}
\end{equation}
By using Lemma \ref{le4.2}, we find,
\[\xi_{n,j}^*(\Lambda_{n,j,R}^{-})=-2\pi b_{j,0}+o_R(1)+o_n(1),\]
where $\lim\limits_{R\to+\infty}o_R(1)=0$ and $\lim\limits_{n\to+\infty}o_n(1)=0$ and then \eqref{compare_xi} shows that,
\begin{equation}
\label{outside_xi}
\xi_{n,j}^*(r)=-2\pi b_{j,0}+o_R(1)+o_n(1)(1+O(R)),\ \ \textrm{for all } \ \ r\in (\Lambda_{n,j,R}^{-},{\delta}).
\end{equation}
In view of \eqref{3.16}, we see that,
\begin{equation*}\label{compare_b}\xi_{n,j}^*= o_n(1) \ \ \textrm{in}\ \ C_{\textrm{loc}}(M\setminus\{q_1\cdots q_m\}),
\end{equation*} which implies that $b_{j,0}=0$ for $j=1,\cdots, m$. This fact, together with \eqref{3.16},
completes the proof of Lemma \ref{le3.2}-$(i)$.\hfill $\square$
\bigskip

For $j=1,\cdots,m$, let
\begin{equation}
\label{phij}
\phi_{n,j}(y)={\frac{\lambda_n^{(1)}}{m}}\left[(R(y,x_{n,j}^{(1)})-R(x_{n,j}^{(1)},x_{n,j}^{(1)}))
+\sum_{l\neq j}(G(y,x_{n,l}^{(1)})-G(x_{n,j}^{(1)},x_{n,l}^{(1)}))\right],
\end{equation}
\begin{equation}
\label{vnji}
v_{n,j}^{(i)}(y)={\hat{u}_n^{(i)}(y)}-\phi_{n,j}(y),\ \ \ i=1,2.
\end{equation}
Recall the definition of $\xi_n$ given before \eqref{4.17}. Our aim is to show that all $b_{j,i}=0$, see Lemma~\ref{le4.2}.
This is done by exploiting the following Pohozaev identity to derive a subtle estimate for $\xi_n$.
\begin{lemma}[\cite{ly2,BJLY}]
\label{le3.3} We have for $i = 1, 2$ and fixed small  {$r\in(0,\delta)$}, it holds,
\begin{equation}\label{3.27}\begin{aligned}
&\int_{\partial B_r(x_{n,j}^{(1)})}\langle\nu, D\xi_n\rangle D_iv_{n,j}^{(1)} +
\langle \nu, Dv_{n,j}^{(2)}\rangle D_i\xi_n\mathrm{d}\sigma
\\&-\frac{1}{2}\int_{\partial B_r(x_{n,j}^{(1)})}\langle D(v_{n,j}^{(1)}+v_{n,j}^{(2)}),D\xi_n\rangle
{\frac{(x-x_{n,j}^{(1)})_i}{|x-x_{n,j}^{(1)}|}}\mathrm{d}\sigma
\\=&-\int_{\partial B_r(x_{n,j}^{(1)})}h(x)\frac{e^{\hat{u}_n^{(1)}}-e^{\hat{u}_n^{(2)}}}
{\|\hat{u}_n^{(1)}-\hat{u}_n^{(2)}\|_{L^\infty(M)}}\frac{(x-x_{n,j}^{(1)})_i}{|x-x_{n,j}^{(1)}|}\mathrm{d}\sigma
\\&+\int_{B_r(x_{n,j}^{(1)})} h(x)\frac{e^{\hat{u}_n^{(1)}}-e^{\hat{u}_n^{(2)}}}
{\|\hat{u}_n^{(1)}-\hat{u}_n^{(2)}\|_{L^\infty(M)}}D_i(\phi_{n,j}+\log h)\mathrm{d} x.
\end{aligned}
\end{equation}
\end{lemma}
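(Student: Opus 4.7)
The plan is to derive \eqref{3.27} by a Pohozaev-type computation: multiply the equation $\Delta\xi_n=-H_n$ with $H_n:=h(e^{\hat u_n^{(1)}}-e^{\hat u_n^{(2)}})/\|\hat u_n^{(1)}-\hat u_n^{(2)}\|_{L^\infty(\Omega)}$ by $\partial_iv_{n,j}^{(1)}$, multiply $\Delta v_{n,j}^{(2)}=-he^{\hat u_n^{(2)}}$ by $\partial_i\xi_n$, then sum and integrate over $B_r(x_{n,j}^{(1)})$. The second equation holds in $B_r(x_{n,j}^{(1)})$ for small $r>0$ because $\phi_{n,j}$ defined in \eqref{phij} is a finite combination of $R(\cdot,x_{n,j}^{(1)})$ and $G(\cdot,x_{n,l}^{(1)})$, $l\neq j$, hence harmonic inside the ball, so that $\Delta v_{n,j}^{(i)}=\Delta\hat u_n^{(i)}=-he^{\hat u_n^{(i)}}$.

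A single integration by parts converts the sum of the two interior Laplacian integrals into the boundary traces $\int_{\partial B_r(x_{n,j}^{(1)})}[\partial_\nu\xi_n\,\partial_iv_{n,j}^{(1)}+\partial_\nu v_{n,j}^{(2)}\,\partial_i\xi_n]\,\mathrm{d}\sigma$ minus the quadratic remainder $\int_{B_r(x_{n,j}^{(1)})}[\nabla\xi_n\cdot\nabla\partial_iv_{n,j}^{(1)}+\nabla v_{n,j}^{(2)}\cdot\nabla\partial_i\xi_n]\,\mathrm{d}x$. The crucial observation, which makes the asymmetric choice of multipliers produce a clean identity, is that $v_{n,j}^{(1)}-v_{n,j}^{(2)}=\hat u_n^{(1)}-\hat u_n^{(2)}=\|\hat u_n^{(1)}-\hat u_n^{(2)}\|_{L^\infty(\Omega)}\,\xi_n$ (the harmonic correction $\phi_{n,j}$ cancels). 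Consequently the antisymmetric combination $\partial_j\xi_n\,\partial_i\partial_j(v_{n,j}^{(1)}-v_{n,j}^{(2)})-\partial_j(v_{n,j}^{(1)}-v_{n,j}^{(2)})\,\partial_i\partial_j\xi_n$ vanishes identically, and a direct rearrangement yields
\begin{equation*}
\nabla\xi_n\cdot\nabla\partial_iv_{n,j}^{(1)}+\nabla v_{n,j}^{(2)}\cdot\nabla\partial_i\xi_n=\tfrac{1}{2}\partial_i\big[\nabla\xi_n\cdot\nabla(v_{n,j}^{(1)}+v_{n,j}^{(2)})\big].
\end{equation*}
Applying the divergence theorem to this total derivative produces the $\tfrac12\langle D(v_{n,j}^{(1)}+v_{n,j}^{(2)}),D\xi_n\rangle\nu_i$ boundary term that completes the LHS of \eqref{3.27}.

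The matching RHS is obtained from the same combination by substituting the PDEs on the Laplacian side instead of integrating by parts. Using once again $\hat u_n^{(1)}-\hat u_n^{(2)}=v_{n,j}^{(1)}-v_{n,j}^{(2)}$, the two interior integrals $-\int H_n\partial_iv_{n,j}^{(1)}$ and $-\int he^{\hat u_n^{(2)}}\partial_i\xi_n$ telescope to $-\|\hat u_n^{(1)}-\hat u_n^{(2)}\|_{L^\infty(\Omega)}^{-1}\int_{B_r(x_{n,j}^{(1)})}[he^{\hat u_n^{(1)}}\partial_iv_{n,j}^{(1)}-he^{\hat u_n^{(2)}}\partial_iv_{n,j}^{(2)}]\,\mathrm{d}x$. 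Since $\hat u_n^{(i)}=v_{n,j}^{(i)}+\phi_{n,j}$, the product rule gives $he^{\hat u_n^{(i)}}\partial_iv_{n,j}^{(i)}=\partial_i(he^{\hat u_n^{(i)}})-he^{\hat u_n^{(i)}}\partial_i(\phi_{n,j}+\log h)$, and a final integration by parts on the $\partial_i$-term converts the previous expression into the boundary integral $-\int_{\partial B_r}H_n\nu_i\,\mathrm{d}\sigma$ plus the interior integral $\int_{B_r}H_n\,\partial_i(\phi_{n,j}+\log h)\,\mathrm{d}x$ appearing in \eqref{3.27}.

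The conceptual heart of the argument, and the step I expect to be the main obstacle to rediscover, is the antisymmetric cancellation producing the $\tfrac12$-symmetrized gradient term: were $v_{n,j}^{(2)}$ replaced by an arbitrary second function, an uncontrolled antisymmetric remainder would survive and the clean identity would collapse. It is precisely the shared harmonic correction $\phi_{n,j}$ in both $v_{n,j}^{(i)}$ that guarantees $v_{n,j}^{(1)}-v_{n,j}^{(2)}$ is proportional to $\xi_n$ and thereby rescues the computation, yielding a Pohozaev identity tailored to the difference of two solutions which is then available to extract the vanishing of the translation-kernel coefficients $b_{j,1},b_{j,2}$.
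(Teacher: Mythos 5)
Your derivation is correct: the cancellation of the antisymmetric remainder because $v_{n,j}^{(1)}-v_{n,j}^{(2)}=\hat u_n^{(1)}-\hat u_n^{(2)}$ is a constant multiple of $\xi_n$, together with the telescoping $he^{\hat u_n^{(1)}}D_iv_{n,j}^{(1)}-he^{\hat u_n^{(2)}}D_iv_{n,j}^{(2)}=D_i\big(he^{\hat u_n^{(1)}}-he^{\hat u_n^{(2)}}\big)-h\big(e^{\hat u_n^{(1)}}-e^{\hat u_n^{(2)}}\big)D_i(\phi_{n,j}+\log h)$, reproduces exactly the Pohozaev identity \eqref{3.27}. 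The paper itself gives no proof but cites \cite{ly2,BJLY}, where the identity is obtained by this same standard Pohozaev-type computation, so your argument is essentially the intended one.
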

We will need an estimate about both sides of \eqref{3.27}. This is the outcome of rather delicate computations whose proof is given in the appendix.
\begin{lemma}
\label{le3.4}
For $j=1,\cdots,m$, we define
\begin{equation}
\label{3.30}
\quad R_{n,j}^*(x){=}\sum_{h=1}^2\partial_{ y_h}R(y,x)|_{{y=x_{n,j}^{(1)}}}b_{j,h}\tilde B_j,\quad G_{n,k}^*(x)=\sum_{h=1}^2\partial_{y_h}G(y,x)|_{y={x_{n,k}^{(1)}}}b_{k,h}\tilde B_k,
\end{equation}
where
\begin{equation*}
\tilde B_j=8\sqrt{\frac{2}{ h(x^{(1)}_{n,j})}}\int_{\B^2}\frac{|z|^2}{(1+|z|^2)^3}\mathrm{d}z.
\end{equation*}
Then
\begin{equation}
\label{3.31}
\mbox{R.H.S of}~\eqref{3.27}=\tilde B_j\left(e^{-\frac{\hat\mu^{(1)}_{n,j}}{2}}\sum_{h=1}^2D_{hi}^2(\phi_{n,j}+\log h)(x^{(1)}_{n,j})b_{j,h}\right)
+o(e^{-\frac{\hat\mu^{(1)}_{n,j}}{2}}),
\end{equation}
and
\begin{equation}
\label{3.32}
\mbox{L.H.S of}~\eqref{3.27}=-8\pi\left[e^{-\frac{\hat\mu^{(1)}_{n,j}}{2}}D_iR_{n,j}^*(x^{(1)}_{n,j})
+\sum_{k\neq j}e^{-\frac{\hat\mu^{(1)}_{n,k}}{2}}D_iG_{n,k}^*(x^{(1)}_{n,j})\right]+o(e^{-\frac{\hat\mu^{(1)}_{n,j}}{2}}).
\end{equation}
\end{lemma}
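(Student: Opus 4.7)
The plan is to compute each side of the Pohozaev identity \eqref{3.27} directly, using the inner/outer structure of $\xi_n$ from Lemmas \ref{le4.1}--\ref{le3.2} together with the asymptotics recalled in Section 2. For the RHS, I would first note that the boundary integral on $\partial B_r(x_{n,j}^{(1)})$ is controlled by $e^{-\hat\mu_{n,j}^{(1)}}/\|\hat u_n^{(1)}-\hat u_n^{(2)}\|_{L^\infty(\Omega)}$, which is $o(e^{-\hat\mu_{n,j}^{(1)}/2})$ by Lemma \ref{le4.1}$(ii)$. For the interior integral, write $\frac{e^{\hat u_n^{(1)}}-e^{\hat u_n^{(2)}}}{\|\hat u_n^{(1)}-\hat u_n^{(2)}\|_{L^\infty(\Omega)}}=c_n\xi_n$ with $c_n\simeq e^{\hat u_n^{(1)}}$, Taylor-expand $D_i(\phi_{n,j}+\log h)(x)$ about $x_{n,j}^{(1)}$, and rescale $z=e^{(\hat\mu_{n,j}^{(1)}-\log 8)/2}(x-x_{n,j}^{(1)})$. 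The zeroth-order coefficient $D_i(\phi_{n,j}+\log h)(x_{n,j}^{(1)})$ is $O(\hat\mu_{n,j}^{(1)}e^{-\hat\mu_{n,j}^{(1)}})$ by Theorem 2B-$(iii)$ and the critical-point condition on $\mathbf{q}$, while, using Lemma \ref{le4.2}, the rescaled integral $\int_{B_r} hc_n\xi_n\,dx$ is $o(1)$ (the $h=1,2$ contributions vanish by odd parity and the $\psi_{j,0}$ contribution by the direct computation $\int_0^\infty \frac{1-u}{(1+u)^{3}}\,du=0$), so this term is $o(e^{-\hat\mu_{n,j}^{(1)}/2})$. The linear term produces the prefactor $e^{-\hat\mu_{n,j}^{(1)}/2}$ via $(x-x_{n,j}^{(1)})_h=\sqrt{8}\,e^{-\hat\mu_{n,j}^{(1)}/2}z_h$; parity kills all off-diagonal products, leaving only $b_{j,h}\int \frac{z_h^2}{(1+h(q_j)|z|^2)^3}\,dz$, which after the substitution $w=\sqrt{h(q_j)}\,z$ reproduces exactly the constant $\tilde B_j$ of \eqref{3.30}. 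This yields \eqref{3.31}.

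For the LHS, the strategy is to determine the leading outer asymptotics of $\xi_n$ by matched asymptotic expansion. From Lemma \ref{le4.2} and the behavior at infinity $\psi_{k,0}\to -1$, $\psi_{k,h}(z)\sim z_h/(\sqrt{h(q_k)}|z|^2)$ for $h=1,2$, together with $b_{k,0}=0$ from Lemma \ref{le3.2}$(i)$, one obtains, near each blowup $q_k$,
\[
\xi_n(x)\sim\sqrt{8}\,e^{-\hat\mu_{n,k}^{(1)}/2}\sum_{h=1}^{2}\frac{b_{k,h}}{\sqrt{h(q_k)}}\,\frac{(x-x_{n,k}^{(1)})_h}{|x-x_{n,k}^{(1)}|^2}.
\]
Using the splitting $\frac{(x-y)_h}{|x-y|^2}=2\pi\partial_{y_h}G(x,y)-2\pi\partial_{y_h}R(x,y)$ and the Dirichlet condition $\xi_n|_{\partial\Omega}=0$, $\xi_n$ is globally well-approximated by a harmonic combination of $\partial_{y_h}G(\cdot,x_{n,k}^{(1)})$, whose regular parts near $x_{n,j}^{(1)}$ are encoded precisely by $R_{n,j}^*$ (self-interaction at $j$) and by the $G_{n,k}^*$, $k\neq j$, (cross-interactions), as in \eqref{3.30}. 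On $\partial B_r(x_{n,j}^{(1)})$, $\nabla v_{n,j}^{(i)}$ is dominated by $\nabla U_{n,j}^{(i)}\sim-4(x-x_{n,j,*}^{(i)})/|x-x_{n,j,*}^{(i)}|^2$. Substituting this together with the above expansion of $\xi_n$ into the three boundary integrals of \eqref{3.27} and evaluating the resulting $\int_{\partial B_r(0)}$-integrals of products of $(x-y)_i/|x-y|^2$ and gradients of log-singularities via elementary residue identities, the $O(1)$ pieces cancel and one is left precisely with $-8\pi\bigl[\,e^{-\hat\mu_{n,j}^{(1)}/2}D_i R_{n,j}^*(x_{n,j}^{(1)})+\sum_{k\neq j}e^{-\hat\mu_{n,k}^{(1)}/2}D_i G_{n,k}^*(x_{n,j}^{(1)})\,\bigr]+o(e^{-\hat\mu_{n,j}^{(1)}/2})$, proving \eqref{3.32}.

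The main obstacle is the LHS analysis: rigorously justifying the matched asymptotic expansion of $\xi_n$ uniformly up to $\partial B_r(x_{n,j}^{(1)})$ when all $m$ blowup scales $e^{-\hat\mu_{n,k}^{(1)}/2}$ are simultaneously present and pairwise comparable but not equal (Theorem 2B-$(i)$), and then disentangling the resulting cross-products in the three boundary integrals of \eqref{3.27}. The RHS estimate, by contrast, is essentially elementary once the $O(\hat\mu_{n,j}^{(1)} e^{-\hat\mu_{n,j}^{(1)}})$ smallness of $D_i(\phi_{n,j}+\log h)(x_{n,j}^{(1)})$ and the parity structure of the $\psi_{j,i}$ are exploited.
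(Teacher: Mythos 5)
Your estimate of the right hand side is essentially the paper's own argument (Taylor expansion of $D_i(\phi_{n,j}+\log h)$, criticality of $\mathbf q$ via Theorem \ref{th2.b}-$(iii)$,$(vi)$ to kill the zeroth-order coefficient, blow-up scaling, parity, and the explicit integral producing $\tilde B_j$), but one justification is backwards: you bound the boundary integrand by $e^{-\hat\mu_{n,j}^{(1)}}/\|\hat u_n^{(1)}-\hat u_n^{(2)}\|_{L^\infty(\Omega)}$ and then invoke Lemma \ref{le4.1}-$(ii)$, which is an \emph{upper} bound on the norm; dividing by a possibly much smaller quantity than $e^{-\hat\mu_{n,j}^{(1)}}$ gives no smallness at all. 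The correct (and elementary) argument writes the quotient as $h\,c_n\xi_n$ with $|c_n|\lesssim e^{\hat u_n^{(1)}}=O(e^{-\hat\mu_{n,j}^{(1)}})$ on $\partial B_r(x_{n,j}^{(1)})$ by Theorem \ref{th2.b}-$(iv)$ and $|\xi_n|\le 1$, which is exactly \eqref{a1.1}; also, your refinement that $\int_{B_r}hc_n\xi_n\,dx=o(1)$ is unnecessary, since the coefficient $D_i(\phi_{n,j}+\log h)(x_{n,j}^{(1)})=O(\hat\mu_{n,j}^{(1)}e^{-\hat\mu_{n,j}^{(1)}})$ already makes that term $o(e^{-\hat\mu_{n,j}^{(1)}/2})$ against an $O(1)$ integral.

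For the left hand side there is a genuine gap. Matching only the leading inner profiles $\sum_h b_{j,h}\psi_{j,h}$ cannot control the outer behavior of $\xi_n$ at the order $e^{-\hat\mu_{n,j}^{(1)}/2}$ you need: a ``monopole'' component $A_{n,k}G(x_{n,k}^{(1)},\cdot)$ with $A_{n,k}=\int_{\Omega_k}g_n^*\,dy$ of size comparable to $e^{-\hat\mu_{n,k}^{(1)}/2}$ is invisible in your inner limit (it is $o(1)$ after the blow-up scaling) and yet, for $k\neq j$, it would enter the boundary integrals of \eqref{3.27} at exactly the order being computed, destroying \eqref{3.32}. The paper does not argue by matching: it proves the $C^1$ outer expansion \eqref{a0.1} by the Green representation formula (Lemma \ref{lea.1}), and then must invoke the separate, nontrivial estimate $A_{n,j}^{(1)}=o(e^{-\hat\mu_{n,j}^{(1)}/2})$ (Remark 4.1, borrowed from \cite{BJLY,BJLY2,GOS}) before arriving at \eqref{a0.2}; nothing in your sketch addresses this term. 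Two further points you gloss over: the approximation $\nabla v_{n,j}^{(i)}=-4(x-x_{n,j}^{(1)})/|x-x_{n,j}^{(1)}|^2+o(e^{-\hat\mu_{n,j}^{(1)}/2})$ comes from Theorem \ref{th2.a}-(b) together with $\rho_{n,k}^{(1)},\,\lambda_n^{(1)}/m=8\pi+O(\hat\mu_{n,j}^{(1)}e^{-\hat\mu_{n,j}^{(1)}})$ (not merely from ``$\nabla U_{n,j}$ dominates''), and a direct evaluation of the boundary integrals on $\partial B_r$ at fixed $r$ leaves $O(r)\,e^{-\hat\mu_{n,j}^{(1)}/2}$ errors from Taylor-expanding the regular parts, which is not $o(e^{-\hat\mu_{n,j}^{(1)}/2})$; the paper removes this by the divergence-free identity \eqref{a2.1}--\eqref{a2.2}, which lets the contour be shrunk to radius $\theta\to 0$. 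So the right-hand-side half of your proposal is sound modulo the fixable slip, but the left-hand-side half, as written, does not constitute a proof of \eqref{3.32}.
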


In view of these estimates we are now able to prove Lemma \ref{le3.2}-$(ii)$.
\medskip

\noindent {\em Proof of Lemma \ref{le3.2}-$(ii)$.} From Lemma \ref{le3.3}-\ref{le3.4}, we have for $i=1,2$
\begin{equation}
\label{3.33}
\begin{aligned}
&\tilde B_j\sum_{h=1}^2\left(e^{-\frac{\hat\mu^{(1)}_{n,j}}{2}}D_{hi}^2(\phi_{n,j}+\log h)(x^{(1)}_{n,j})b_{j,h}\right)\\
&=-8\pi\sum_{k\neq j}e^{-\frac{\hat\mu^{(1)}_{n,k}}{2}}\sum_{h=1}^2D_{x_i}\partial_{y_h}G(y,x)|_{(y,x)=(x^{(1)}_{n,j},x^{(1)}_{n,k})}b_{kh}\tilde B_k\\
&\quad-8\pi e^{-\frac{\hat\mu^{(1)}_{n,j}}{2}}\sum_{h=1}^2D_{x_i}\partial_{y_h}R(y,x)|_{(y,x)=(x^{(1)}_{n,j},x^{(1)}_{n,j})}b_{jh}\tilde B_j+o(e^{-\frac{\hat\mu^{(1)}_{n,j}}{2}}).
\end{aligned}
\end{equation}
Set
\begin{equation*}
{\bf b}=(\hat b_{1,1}\tilde B_1,\hat b_{1,2}\tilde B_1,\cdots,\hat b_{m,1}\tilde B_m,\hat b_{m,2}\tilde B_m),
\end{equation*}
where
$$\hat b_{kh}=\lim\limits_{n\to+\infty}(e^{\frac{\hat\mu^{(1)}_{n,j}-\hat\mu^{(1)}_{n,k}}{2}})b_{kh}.$$
Then by Theorem \ref{th2.b}-(iii) and sending $n$ to $+\infty$, we obtain that
\begin{equation}
\label{3.34}
D^2f_m(q_1,\cdots,q_m)\cdot{\bf b}=0,
\end{equation}
where $f_m$ is defined in \eqref{1.7}. By the non-degeneracy assumption $\mathrm{det}(D_{\Omega}^2f_m({\bf q}))\neq 0$, then we can immediately conclude that ${\bf b}=0$, i.e.,
$$b_{j,1}=b_{j,2}=0,\quad \forall j=1,\cdots,m.$$
This fact concludes the proof of Lemma \ref{le3.2}. \hfill $\square$

\bigskip
\section{Appendix}
This section is devoted to the proof of Lemma \ref{le3.4}. First of all, we prove an estimate which will be used later on.
\medskip
\begin{lemma}
\label{lea.1}
\begin{equation}
\label{a0.1}
\xi_n(x)=\sum_{j=1}^mA^{(1)}_{n,j}G(x^{(1)}_{n,j},x)+\sum_{j=1}^m\sum_{h=1}^2b_{n,j,h}\partial_{y_h}G(y,x)|_{y=x^{(1)}_{n,j}}+o(e^{-\frac{\hat\mu^{(1)}_{n,1}}{2}})
\end{equation}
holds in $C^1(\om\setminus\bigcup_{j=1}^mB_{\theta}(x^{(1)}_{n,j}))$ with suitable small constant $\theta$, where $\partial_{y_h}G(y,x)=\frac{\partial G(y,x)}{\partial y_h},~y=(y_1,y_2),$
\begin{equation*}
A^{(1)}_{n,j}=\int_{\Omega_j}g_n^*(y)\mathrm{d}y,\quad \mathrm{and}\quad b_{n,j,h}=e^{-\frac12\hat\mu^{(1)}_{n,j}}\frac{b_{j,h}8\sqrt{2}}{\sqrt{h(q_j)}}\int_{\mathbb{R}^2}\frac{|z|^2}{(1+|z|^2)^3}\mathrm{d}z.
\end{equation*}
\end{lemma}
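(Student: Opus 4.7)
\textbf{Proof plan for Lemma \ref{lea.1}.}

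The strategy is Green's representation combined with a Taylor expansion of $G$ in the source variable around each blow-up center. Since $\xi_n$ solves $-\Delta\xi_n=g_n^*$ in $\Omega$ with $\xi_n=0$ on $\partial\Omega$ by \eqref{4.19}, one has
\begin{equation*}
\xi_n(x)=\sum_{j=1}^{m}\int_{\Omega_j}G(y,x)\,g_n^*(y)\,\mathrm{d}y.
\end{equation*}
For $x\in\om\setminus\bigcup_{j=1}^{m}B_{\theta}(x_{n,j}^{(1)})$, the map $y\mapsto G(y,x)$ is smooth on a neighborhood of $x_{n,j}^{(1)}$, so I Taylor expand it to second order about $x_{n,j}^{(1)}$, uniformly in $x$. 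Integrated against $g_n^*$ on $\Omega_j$, the zero-order term yields $A_{n,j}^{(1)}G(x_{n,j}^{(1)},x)$ by definition of $A_{n,j}^{(1)}$, and the first-order term produces $\sum_{h=1}^{2}b_{n,j,h}\partial_{y_h}G(y,x)|_{y=x_{n,j}^{(1)}}$, provided I can show that the first moment $\int_{\Omega_j}(y-x_{n,j}^{(1)})_h\,g_n^*\,\mathrm{d}y$ coincides with $b_{n,j,h}$ up to $o(e^{-\hat\mu^{(1)}_{n,j}/2})$.

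To identify this moment I rescale via $z=\gamma_{n,j}(y-x_{n,j}^{(1)})$ with $\gamma_{n,j}=e^{(\hat\mu_{n,j}^{(1)}-\log 8)/2}$. By \eqref{2.5}--\eqref{2.7}, Theorem \ref{th2.a}(a), \eqref{2.6} and Lemma \ref{le4.1}(ii), on any fixed ball $B_R(0)$ the weight $\gamma_{n,j}^{-2}h(y)c_n(y)$ converges to $8h(q_j)/(1+h(q_j)|z|^2)^2$, while $\xi_{n,j}(z)\to\sum_{i=0}^{2}b_{j,i}\psi_{j,i}(z)$ in $\m^0(\B^2)$ by Lemma \ref{le4.2}. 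A change of variables then gives
\begin{equation*}
\int_{B_{R/\gamma_{n,j}}(x_{n,j}^{(1)})}\!\!(y-x_{n,j}^{(1)})_h\,g_n^*\,\mathrm{d}y=\gamma_{n,j}^{-1}\!\!\int_{B_R(0)}\!\!\frac{8h(q_j)\,z_h\,\xi_{n,j}(z)}{(1+h(q_j)|z|^2)^2}\,\mathrm{d}z\,(1+o(1)).
\end{equation*}
The $b_{j,0}$ contribution is killed by the radial symmetry of $\psi_{j,0}$ against the odd weight $z_h$; the cross term with $\psi_{j,h'}$ for $h'\neq h$ vanishes by parity; only the diagonal piece $z_h\psi_{j,h}$ survives, and a rescaling $w=\sqrt{h(q_j)}\,z$ produces exactly the prefactor appearing in the definition of $b_{n,j,h}$.

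The remaining errors are routine. The tail on $\Omega_j\setminus B_{R/\gamma_{n,j}}(x_{n,j}^{(1)})$ is controlled using the bound $|h c_n|\le C\gamma_{n,j}^{2}/(1+\gamma_{n,j}^{2}|y-x_{n,j,*}^{(1)}|^{2})^{2}$ together with $\|\xi_n\|_{\infty}=1$, which contributes $o_R(1)\,e^{-\hat\mu_{n,j}^{(1)}/2}$; the quadratic Taylor remainder is $O(|y-x_{n,j}^{(1)}|^{2})g_n^{*}$ and integrates to $O(\gamma_{n,j}^{-2})=O(e^{-\hat\mu_{n,j}^{(1)}})=o(e^{-\hat\mu_{n,j}^{(1)}/2})$. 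The $C^1$ statement follows by commuting $\nabla_x$ with the integral, since $\nabla_x G(y,x)$ is uniformly bounded for $y$ in a neighborhood of $x_{n,j}^{(1)}$ and $|x-x_{n,j}^{(1)}|\ge\theta$. The delicate step is the first-moment identification: one must justify passing to the limit inside an integral with the non-integrable weight $z_h$, exploiting both the $(1+|z|^2)^{-2}$ decay built into the bubble density and the parity of $\psi_{j,0},\psi_{j,1},\psi_{j,2}$ to extract precisely the coefficient $b_{n,j,h}$ and nothing else.
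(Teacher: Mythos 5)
Your proposal matches the paper's own argument: Green's representation with Dirichlet data, splitting off the zero-order term $A_{n,j}^{(1)}G(x_{n,j}^{(1)},x)$, a first-order Taylor expansion of $y\mapsto G(y,x)$ at $x_{n,j}^{(1)}$ whose quadratic remainder is $O(\mu_{n,j}^{(1)}e^{-\hat\mu_{n,j}^{(1)}})=o(e^{-\hat\mu_{n,j}^{(1)}/2})$, and identification of the first moment of $g_n^*$ by rescaling and invoking the limit $\xi_{n,j}\to\sum_i b_{j,i}\psi_{j,i}$ of Lemma \ref{le4.2}, with parity killing all but the diagonal term and producing exactly the constant in $b_{n,j,h}$; the $C^1$ statement is likewise obtained by the same estimates applied to $\nabla_xG$. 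The tail/limit-interchange issue you flag is handled in the paper the same way (cut at $|z|\le\Lambda_{n,j,r}^+$ and use the $(1+|z|^2)^{-2}$ decay), so there is no substantive difference in route.
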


\noindent {\bf Remark 4.1.} In fact, we can show that $A^{(1)}_{n,j}=o(e^{-\frac{\hat\mu^{(1)}_{n,j}}{2}}),j=1,\cdots,m$ by the same argument in {\cite[Lemma 4.4]{BJLY}, \cite[Lemma 5.4]{BJLY2}, \cite[(4.5)]{GOS}}. As a consequence, \eqref{a0.1} can be written as follows:
\begin{equation}
\label{a0.2}
\xi_n(x)=\sum_{j=1}^m\sum_{h=1}^2b_{n,j,h}\partial_{y_h}G(y,x)|_{y=x^{(1)}_{n,j}}+o(e^{-\frac{\hat\mu^{(1)}_{n,1}}{2}})
\quad\mathrm{in}\quad C^1(\om\setminus\bigcup_{j=1}^mB_{\theta}(x^{(1)}_{n,j})).
\end{equation}
\medskip

\begin{proof}
By the Green representation formula,
\begin{equation}
\label{a0.3}
\begin{aligned}
\xi_n(x)=&\int_{\Omega}G(y,x)g_n^*(y)\mathrm{d}y\\
=&\sum_{j=1}^mA^{(1)}_{n,j}G(x^{(1)}_{n,j},x)+\sum_{j=1}^m\int_{\Omega_j}(G(y,x)-G(x^{(1)}_{n,j},x))g_n^*(y)\mathrm{d}y.
\end{aligned}
\end{equation}
For $x\in\om\setminus\bigcup_{j=1}^mB_{\theta}(x^{(1)}_{n,j})$, we see from Theorem \ref{th2.a}-(a) and Theorem \ref{th2.b}-$(iv)$ that,
\begin{equation}
\label{a0.4}
\begin{aligned}
&\int_{\Omega_j}(G(y,x)-G(x^{(1)}_{n,j},x))g_n^*(y)\mathrm{d}y\\
&=\int_{B_r(x^{(1)}_{n,j})}\langle \partial_yG(y,x)|_{y=x^{(1)}_{n,j}},y-x^{(1)}_{n,j}\rangle g_n^*(y)\mathrm{d}y\\
&\quad +O(1)\left(\int_{B_r(x^{(1)}_{n,j})}\frac{|y-x^{(1)}_{n,j}|^2e^{\hat\mu^{(1)}_{n,j}}}{(1+e^{\hat\mu^{(1)}_{n,j}}|y-x^{(1)}_{n,j,*}|^2)^2}\mathrm{d}y\right)\
+O(e^{-\hat\mu^{(1)}_{n,j}})\\
&=\int_{B_r(x^{(1)}_{n,j})}\langle \partial_yG(y,x)|_{y=x^{(1)}_{n,j}},y-x^{(1)}_{n,j}\rangle g_n^*(y)\mathrm{d}y+O(e^{-\hat\mu^{(1)}_{n,j}})
\end{aligned}
\end{equation}
for suitable $r>0.$ In addition, we have
\begin{equation}
\label{a0.5}
\begin{aligned}
&\int_{B_r(x^{(1)}_{n,j})}\langle \partial_yG(y,x)|_{y=x^{(1)}_{n,j}},y-x^{(1)}_{n,j}\rangle g_n^*(y)\mathrm{d}y\\
&=16\sqrt{2}e^{-\frac{\hat\mu^{(1)}_{n,j}}{2}}\int_{B_{\Lambda_{n,j,r}}^+(0)}
\dfrac{\langle \partial_yG(y,x)|_{y=x^{(1)}_{n,j}},z\rangle  h(x^{(1)}_{n,j})\xi_{n,j}(z)}
{\big(1+h(x^{(1)}_{n,j}) |z+O(e^{-\frac{\hat\mu^{(1)}_{n,j}}{2}})|^2\big)^2}\mathrm{d}z+o(e^{-\frac{\hat\mu^{(1)}_{n,j}}{2}})
\end{aligned}
\end{equation}
By  Lemma \ref{le4.1}, we have for any $x\in\om\setminus\bigcup_{j=1}^mB_{\theta}(x^{(1)}_{n,j})$ we get
\begin{equation}
\label{a0.6}
\int_{B_r(x^{(1)}_{n,j})}\langle \partial_yG(y,x)|_{y=x^{(1)}_{n,j}},y-x^{(1)}_{n,j}\rangle g_n^*(y)\mathrm{d}y=\sum_{h=1}^2b_{n,j,h}\partial_{y_h}G(y,x)|_{y=x^{(1)}_{n,j}}+o(e^{-\frac{\hat\mu^{(1)}_{n,j}}{2}}).
\end{equation}
From \eqref{a0.3} and \eqref{a0.6}, we conclude that \eqref{a0.1} holds in $C^0(\om\setminus\bigcup_{j=1}^mB_{\theta}(x^{(1)}_{n,j}))$. The proof of the fact that \eqref{a0.1} holds in $C^1(\om\setminus\bigcup_{j=1}^mB_{\theta}(x^{(1)}_{n,j}))$ is similar and we omit the proof.
\end{proof}

Now we will provide the proof of Lemma \ref{le3.4}. We divide it into two parts.

\medskip
\noindent {\em Proof of Lemma \ref{le3.4}-\eqref{3.31}.} By Theorem \ref{th2.b}-$(iv)$, we have
\begin{equation}
\label{a1.1}
\int_{\partial B_r(x^{(1)}_{n,j})} he^{\hat u_n}\xi_n(1+o(1))\frac{(x-x^{(1)}_{n,j})_i}{|x-x^{(1)}_{n,j}|}\mathrm{d}\sigma=O(e^{-\hat\mu^{(1)}_{n,j}}).
\end{equation}
For the second term on the right hand side of \eqref{3.27}, it is not difficult to see that,
\begin{equation}
\label{a1.2}
\begin{aligned}
D_i(\phi_{n,j}+\log h)=~&D_i(\phi_{n,j}+\log h)(x^{(1)}_{n,j})+\sum_{h=1}^2D_{ih}^2(\phi_{n,j}+\log h)(x^{(1)}_{n,j})(x-x^{(1)}_{n,j})_h\\
&+O(|x-x^{(1)}_{n,j}|^2).
\end{aligned}
\end{equation}
Since ${\bf q}$ is a critical point of $f_m$, by \eqref{2.6} and Theorem \ref{th2.b}-$(iii)$-$(vi)$, we find that,
\begin{equation}
\label{a1.3}
D_i(\phi_{n,j}+\log h)(x^{(1)}_{n,j})=D_i(G_{j}^*+\log h)(x^{(1)}_{n,j})+O(\hat\mu^{(1)}_{n,j}e^{-\hat\mu^{(1)}_{n,j}})=O(\hat\mu^{(1)}_{n,j}e^{-\hat\mu^{(1)}_{n,j}}).
\end{equation}
From   \eqref{a1.2} and \eqref{a1.3} we have,
\begin{equation}
\label{a1.4}
\begin{aligned}
&\int_{B_r(x^{(1)}_{n,j})} he^{\hat u_n}\xi_n(1+o(1))D_i(\phi_{n,j}+\log h)\mathrm{d}x\\
&=\int_{B_{\Lambda_{n,j,r}^+}(0)}\frac{16\sqrt{2}h(x^{(1)}_{n,j})\xi_n(x^{(1)}_{n,j}+e^{-\frac{(\hat\mu^{(1)}_{n,j}-\log8)}{2}}z)}{(1+  h(x^{(1)}_{n,j}) |z|^2)^2}
\left(\sum_{h=1}^2D_{hi}^2(\phi_{n,j}+\log h)(x^{(1)}_{n,j})e^{-\frac{\hat\mu^{(1)}_{n,j}}{2}}z_h\right)dz\\
&\quad+o(e^{-\frac{\hat\mu^{(1)}_{n,j}}{2}}),
\end{aligned}
\end{equation}
and then combined with Lemma \ref{le4.1}, we conclude that
\begin{equation}
\label{a1.5}
\begin{aligned}
&\int_{B_r(x^{(1)}_{n,j})} he^{\hat u_n}\xi_n(1+o(1))D_i(\phi_{n,j}+\log h)\mathrm{d}x\\
&=\int_{B_{\Lambda_{n,j,r}^+}(0)}\frac{8\sqrt{2}h(x^{(1)}_{n,j})\sqrt{ h(x^{(1)}_{n,j}) }|z|^2}{(1+  h(x^{(1)}_{n,j}) |z|^2)^3}
\mathrm{d}z
\left(\sum_{h=1}^2b_{j,h}D_{hi}^2(\phi_{n,j}+\log h)(x^{(1)}_{n,j})e^{-\frac{\hat\mu^{(1)}_{n,j}}{2}}\right)\\
&\quad+o(e^{-\frac{\hat\mu^{(1)}_{n,j}}{2}})\\
&=\tilde B_j\left(\sum_{h=1}^2b_{j,h}D_{hi}^2(\phi_{n,j}+\log h)(x^{(1)}_{n,j})e^{-\frac{\hat\mu^{(1)}_{n,j}}{2}}\right)+o(e^{-\frac{\hat\mu^{(1)}_{n,j}}{2}}).
\end{aligned}
\end{equation}
Clearly \eqref{3.31} follows from \eqref{a1.1} and \eqref{a1.5}. \hfill $\square$

\medskip
\noindent {\em Proof of Lemma \ref{le3.4}-\eqref{3.32}.} By the definition of $G_{n,k}^*(x)$, we have
\begin{equation*}
\Delta G_{n,k}^*(x)=0\quad \mathrm{in}\quad B_r(x^{(1)}_{n,j})\setminus B_{\theta}(x^{(1)}_{n,j}),\quad \forall\theta\in(0,r).
\end{equation*}
Then for any $x\in B_r(x^{(1)}_{n,j})\setminus B_{\theta}(x^{(1)}_{n,j})$, we have
\begin{equation}
\label{a2.1}
\begin{aligned}
0=~&\Delta G_{n,k}^*D_i\log\frac{1}{|x-x^{(1)}_{n,j}|}+\Delta\log\frac{1}{|x-x^{(1)}_{n,j}|}D_iG_{n,k}^*\\
=~&\mathrm{div}\Bigg(\nabla G_{n,k}^*D_i\log\frac{1}{|x-x^{(1)}_{n,j}|}+\nabla\log\frac{1}{|x-x^{(1)}_{n,j}|}D_iG_{n,k}^*
\\~&-\nabla G_{n,k}^*\cdot\nabla\log\frac{1}{|x-x^{(1)}_{n,j}|}e_i\Bigg),
\end{aligned}
\end{equation}
where $e_i=\frac{x_i}{|x|},~i=1,2$. It implies that
\begin{equation}
\label{a2.2}
\int_{\partial B_r(x^{(1)}_{n,j})}\frac{\nabla_iG_{n,k}^*}{|x-x^{(1)}_{n,j}|}\mathrm{d}\sigma
=\int_{\partial B_{\theta}(x^{(1)}_{n,j})}\frac{\nabla_iG_{n,k}^*}{|x-x^{(1)}_{n,j}|}\mathrm{d}\sigma.
\end{equation}
In view of \eqref{a0.2}, we have
\begin{equation}
\label{a2.3}
\xi_n(x)=\sum_{k=1}^me^{-\frac{\hat\mu^{(1)}_{n,k}}{2}}G_{n,k}^*(x)+o(e^{-\frac{\hat\mu^{(1)}_{n,j}}{2}})\quad \mathrm{in}\quad \mathrm{C}^1(B_r(x^{(1)}_{n,j})\setminus B_{\theta}(x^{(1)}_{n,j})).
\end{equation}
By using Theorem \ref{th2.b}, we find that,
\begin{equation*}
\frac{\l_n^{(1)}}{m}=\rho^{(1)}_{n,j}+O(\hat\mu^{(1)}_{n,j}e^{-\hat\mu^{(1)}_{n,j}})=8\pi+O(\hat\mu^{(1)}_{n,j}e^{-\hat\mu^{(1)}_{n,j}}),
\end{equation*}
which together with Theorem \ref{th2.a}-(b), implies that for $i=1,2$,
\begin{equation}
\label{a2.4}
\begin{aligned}
\nabla v^{(i)}_{n,j}(x)=~&\nabla (\tilde u_n^{(i)}-\phi_{n,j})=\nabla\left(\tilde u_n^{(i)}-\frac{\l_n^{(1)}}{m}R(x,x^{(1)}_{n,j})-\frac{\l_n^{(1)}}{m}\sum_{k\neq j}G(x,x^{(1)}_{n,j})\right)\\
=~&\nabla\left(\tilde u_n^{(i)}-\frac{\l_n^{(1)}}{m}\sum_{k=1}^m G(x,x^{(1)}_{n,k})-\frac{\l_n^{(1)}}{2\pi m}\log|x-x^{(1)}_{n,j}|\right)\\
=~&\nabla\phi_n^{(i)}-4\frac{(x-x^{(1)}_{n,j})}{|x-x^{(1)}_{n,j}|^2}+o(e^{-\frac{\hat\mu^{(1)}_{n,j}}{2}})\\
=~&-4\frac{(x-x^{(1)}_{n,j})}{|x-x^{(1)}_{n,j}|^2}+o(e^{-\frac{\hat\mu^{(1)}_{n,j}}{2}})
\quad \mathrm{in}\quad  \mathrm{C}^1(B_r(x^{(1)}_{n,j})\setminus B_{\theta}(x^{(1)}_{n,j})).
\end{aligned}
\end{equation}
By using $D_iD_h(\log|z|)=\frac{\delta_{ih}|z|^2-2z_iz_h}{|z|^4}$, we see that,
\begin{equation}
\label{a2.5}
\int_{\partial B_{\theta}(x^{(1)}_{n,j})}\frac{\nabla_i G_{n,j}^*}{|x-x^{(1)}_{n,j}|}\mathrm{d}\sigma
=2\pi D_i\sum_{h=1}^2\partial_{y_h}R(y,x)|_{x=y=x^{(1)}_{n,j}}b_{j,h}\tilde B_j+o_{\theta}(1).
\end{equation}
From \eqref{a2.2}-\eqref{a2.5}, we get for any $\theta\in(0,r),$
\begin{equation}
\label{a2.6}
\begin{aligned}
&\mbox{L.H.S. of}~\eqref{3.27}\\&=-8\pi\left[e^{-\frac{\hat\mu^{(1)}_{n,j}}{2}}D_iR_{n,j}^*(x^{(1)}_{n,j})
+\sum_{k\neq j}e^{-\frac{\hat\mu^{(1)}_{n,k}}{2}}D_iG_{n,k}^*(x^{(1)}_{n,j})\right]+o_{\theta}(1)e^{-\frac{\hat\mu^{(1)}_{n,j}}{2}},
\end{aligned}
\end{equation}
where $o_{\theta}(1)\to0$ as $\theta\to0$. This fact concludes the proof of Lemma \ref{le3.4}. \hfill $\square$

\vspace{1.5cm}

\end{document}